\newtheorem{Theorem}{Theorem}[section]
\newtheorem{Lemma}[Theorem]{Lemma}
\newtheorem{Prop}[Theorem]{Proposition}
\newtheorem{Rem}[Theorem]{Remark}
\newtheorem{Def}[Theorem]{Definition}
\newtheorem{Hyp}[Theorem]{Hypothesis}
\newtheorem{Notat}{Notation}
\def\N{\mathbb N}
\def\R{\mathbb R}
\begin{document}

\title[Dimension of invariant sets]
{Dimension of attractors and invariant sets of damped wave equations in unbounded domains}%
\author{Martino Prizzi}

\address{Martino Prizzi, Universit\`a di Trieste, Dipartimento di
Matematica e Informatica, Via Valerio 12/1, 34127 Trieste, Italy}%
\email{mprizzi@units.it}%
\subjclass{35L70,
 35B41 }%
\keywords{damped wave equation, invariant set, attractor, dimension}%

\date{\today}%
\begin{abstract} Under fairly general assumptions, we prove that every compact invariant set $\mathcal I$ of the semiflow generated
by the semilinear damped wave equation
\begin{equation*}
\begin{aligned}
u_{tt}+\alpha u_t+\beta(x)u-\Delta u&=f(x,u),&&(t,x)\in[0,+\infty[\times\Omega,\\
u&=0,&&(t,x)\in[0,+\infty[\times\partial\Omega
\end{aligned}\end{equation*}
in $H^1_0(\Omega)\times L^2(\Omega)$ has finite Hausdorff and
fractal dimension. Here $\Omega$ is a regular, possibly unbounded,
domain in $\R^3$ and $f(x,u)$ is a nonlinearity of critical
growth. The nonlinearity $f(x,u)$ needs not to satisfy any
dissipativeness assumption and the invariant subset $\mathcal I$
needs not to be an attractor. If $f(x,u)$ is dissipative and
$\mathcal I$ is the global attractor, we give an explicit bound on
the Hausdorff and fractal dimension of $\mathcal I$ in terms of
the structure parameters of the equation.
\end{abstract}
\maketitle

\section{Introduction}
In this paper we consider the damped wave equation
\begin{equation}\label{equation1}
\begin{aligned}
u_{tt}+\alpha u_t+\beta(x)u-\Delta
u&=f(x,u),&&(t,x)\in[0,+\infty[\times\Omega,\\
u&=0,&&(t,x)\in[0,+\infty[\times\partial\Omega
\end{aligned}\end{equation}

Here $\Omega$ is a regular (possibly unbounded) open set in
$\R^3$, $\beta(x)$ is a potential such that the operator
$-\Delta+\beta(x)$ is positive, and $f(x,u)$ is a nonlinearity of
critical growth (i.e. of polynomial growth less than or equal to three). The assumptions on $\beta(x)$ and $f(x,u)$ will be made more
precise in Section 2 below. Under such assumptions, equation
(\ref{equation1}) generates a local semiflow $\Pi$ in the space
$H^1_0(\Omega)\times L^2(\Omega)$. Suppose that the semiflow $\Pi$ admits a compact
invariant set $\mathcal I$ (i.e. $\Pi(t)\mathcal I=\mathcal I$ for all $t\geq0$).
We do not make any structure assumption on the nonlinearity $f(x,u)$ and therefore
 we do not assume that $\mathcal I$ is
the global attractor of equation (\ref{equation1}). Our aim is to prove that
$\mathcal I$ has finite Hausdorff and fractal dimension and to give an
explicit estimate of its dimension.

When $\Omega$ is a bounded
domain and $f(x,u)$ satisfies suitable dissipativeness conditions,
the existence of a finite dimensional compact global
attractor for (\ref{equation1})  is  a classical achievement (see e.g. \cite{Lady, Tem}
and the references therein).

When $\Omega$ is unbounded, new difficulties arise due to the lack
of compactness of the Sobolev embeddings. These difficulties can
be overcome in several ways: by exploiting the \emph{finite speed of propagation property}
(e.g. in \cite{F}), by introducing \emph{weighted or uniform spaces} (see
e.g. \cite{Z}), by developing suitable \emph{tail-estimates} (see
e.g. \cite{PR1}).

Concerning the finite dimensionality of the attractor, in the
\emph{unbounded domain case} very few results are available. In
\cite{Z} Zelik proved finite dimensionality of attractors in the
context of uniform spaces, assuming that $\beta(x)$ is constant
and $f(x,u)$ is independent of $x$ and satisfies $f(u)u\leq 0$,
$f'(u)\leq L$ for all $u\in\R$. The technique exploited by Zelik
seems not to give explicit bounds for the dimension of the
attractor. In \cite{KaSta}, Karachalios and Stavrakakis considered
an equation of the form
\begin{equation} u_{tt}+\alpha
u_t+\beta(x)u-g(x)^{-1}\Delta u=f(u)+h(x),
\end{equation}
 where $g(\cdot)$ is a
positive function belonging to $L^\infty\cap L^{3/2}$. In this
case the weight $g(x)^{-1}$ ``forces" the operator
$-g(x)^{-1}\Delta$ to have compact resolvent: the result then is
achieved by exploiting directly the technique of \emph{volume
tracking} developed by Temam  and other authors  for bounded
domains (see \cite{Tem}).

In this paper we do not make any structure assumption on the
nonlinearity $f(x,u)$. Our only assumption is that $\partial_u
f(x,0)$ is non negative and belongs to $L^r(\Omega)$ for some
$r>3$. The positivity of $\partial_u f(x,0)$ is not a real
restriction, because its negative part can be absorbed in
$\beta(x)$. Under this assumption, we shall prove that $\mathcal
I$ has finite Hausdorff and fractal dimension in the energy space
$H^1_0(\Omega)\times L^2(\Omega)$. Also, we give an explicit
estimate of the dimension of $\mathcal I$, in terms of the main
parameters involved in the equation and of the quantity
$\sup\{\|(u,v)\|\mid (u,v)\in\mathcal I\}$. In order to achieve
our result, we shall exploit the technique of \emph{volume
tracking}, as expounded in \cite{Tem}. However, we cannot apply
directly the arguments of \cite{Tem}, since the operator
$-\Delta+\beta(x)$ does not have compact resolvent. Indeed, in the
\emph{bounded domain case} (resp. in the \emph{weighted Laplacian
case} considered by Karachalios and Stavrakakis) the key point is
that
\begin{equation}\label{intro2}
\frac1d\sum_{j=1}^d\lambda_j^{-1}\to
0\quad\text{as $d\to\infty$,}
\end{equation}
where $(\lambda_j)_{j\in\N}$ is the sequence of the eigenvalues of
$-\Delta$ (resp. of $-g(x)^{-1}\Delta$). In general the operator
$-\Delta+\beta(x)$, when $\Omega$ is unbounded, does not satisfy
such property, since it possesses a nontrivial essential spectrum
and its eigenvalues below the bottom of the essential spectrum are
finite or form a sequence which accumulate to the bottom the
essential spectrum. Yet, a more accurate analysis shows that the
numbers $\lambda_j$ in (\ref{intro2}) can be replaced by
$\check\lambda_j$, where $(\check\lambda_j)_{j\in\N}$ is the
sequence of the eigenvalues of the following \emph{weighted
eigenvalue problem}:
\begin{equation}\label{intro3}
-\Delta \phi+\beta(x) \phi=\check\lambda\partial_u f(x, \bar
u(x))^2\phi,
\end{equation}
where $\bar U=(\bar u,\bar v)\in\mathcal I$. It turns out that
(\ref{intro3}) has a pure point spectrum. Moreover, thanks to the
Cwickel-Lieb-Rozenblum inequality, it is possible to determine the
asymptotics of the sequence $(\check\lambda_j)_{j\in\N}$
independently of $\bar U\in\mathcal I$, and the result will
follow.

The paper is organized as follows. In Section 2 we introduce
notations, we state the main assumptions and we collect some
preliminaries about the semiflow generated by equation
(\ref{equation1}).  In Section 3 we recall the definition of
Hausdorff and fractal dimension and we prove that any compact
invariant set $\mathcal I$ of $\Pi$ has finite Hausdorff and
fractal dimension in $H^1_0(\Omega)\times L^2(\Omega)$. In Section
4
 we specialize our result to the case of  dissipative equations and we
show that the dimensions of the attractors of (\ref{equation1})
remain bounded as $\alpha\to\infty$.


\section {Notation, preliminaries and remarks}

Let $\sigma\geq 1$. We denote by $L^\sigma_{\rm u}(\R^N)$ the set of measurable functions $\omega\colon \R^N\to\R$ such that
\begin{equation*}
|\omega|_{L^\sigma_{\rm u}}:=\sup_{y\in\R^N}\left(\int_{B(y)}|\omega(x)|^\sigma\,dx\right)^{1/\sigma}<\infty,
\end{equation*}
where, for $y\in\R^N$, $B(y)$ is the open unit cube in $\R^N$ centered at $y$.

In this paper we assume throughout that $N=3$, and we fix an open (possibly unbounded) set $\Omega\subset\R^3$.

\begin{Prop}\label{prop1}
Let $\sigma>3/2$ and let $\omega\in L^\sigma_{\rm u}(\R^3)$. Set $\rho:= 3/2\sigma$. Then, for every $\epsilon>0$ and for every $u\in H^1_0(\Omega)$,
\begin{equation}
\int_{\Omega}|\omega(x)| |u(x)|^2\,dx\leq  |\omega|_{L^\sigma_{\rm u}}
\left(\rho\epsilon M_B^{2}|u|_{H^1}^{2}+(1-\rho)\epsilon^{-\rho/(1-\rho)}|u|_{L^2}^{2}\right),
\end{equation}
where $M_B$ the constant of the Sobolev embedding $H^1(B)\subset  L^6(B)$ and $B$ is the open unit
cube in $\R^3$.
Moreover, for every $u\in H^1_0(\Omega)$,
\begin{equation}
\int_{\Omega}|\omega(x)| |u(x)|^2\,dx\leq M_B^{2\rho} |\omega|_{L^\sigma_{\rm u}}|u|_{H^1}^{2\rho}|u|_{L^2}^{2(1-\rho)}.
\end{equation}
\end{Prop}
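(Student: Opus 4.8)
The plan is to reduce everything to a single unit cube via the splitting of $\R^3$ into a lattice of unit cubes, apply the Gagliardo–Nirenberg–Sobolev inequality on each cube, and then sum up. First I would tile $\R^3$ by the cubes $B(k)$, $k\in\Z^3$ (up to a null set), so that
\begin{equation*}
\int_\Omega|\omega(x)||u(x)|^2\,dx=\sum_{k\in\Z^3}\int_{B(k)\cap\Omega}|\omega(x)||u(x)|^2\,dx.
\end{equation*}
On a fixed cube $B(k)$ I would apply H\"older's inequality with exponents $\sigma$ and $\sigma'=\sigma/(\sigma-1)$ to get $\int_{B(k)}|\omega||u|^2\le|\omega|_{L^\sigma_{\rm u}}\,\big(\int_{B(k)}|u|^{2\sigma'}\big)^{1/\sigma'}$. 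Since $\sigma>3/2$ we have $2\sigma'\le6$, so $L^6(B)\hookrightarrow L^{2\sigma'}(B)$ (the cube has finite measure), and one checks that the exponent $\rho=3/(2\sigma)$ is precisely the interpolation parameter for which $\|u\|_{L^{2\sigma'}(B)}\le\|u\|_{L^6(B)}^{\rho}\|u\|_{L^2(B)}^{1-\rho}$; combining with $\|u\|_{L^6(B)}\le M_B\|u\|_{H^1(B)}$ gives on each cube
\begin{equation*}
\int_{B(k)}|\omega||u|^2\,dx\le|\omega|_{L^\sigma_{\rm u}}M_B^{2\rho}\|u\|_{H^1(B(k))}^{2\rho}\|u\|_{L^2(B(k))}^{2(1-\rho)}.
\end{equation*}

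Next I would sum over $k$. Because $2\rho+2(1-\rho)=2$, I apply H\"older's inequality for series with exponents $1/\rho$ and $1/(1-\rho)$ to the sum $\sum_k\|u\|_{H^1(B(k))}^{2\rho}\|u\|_{L^2(B(k))}^{2(1-\rho)}$, obtaining
\begin{equation*}
\sum_{k}\|u\|_{H^1(B(k))}^{2\rho}\|u\|_{L^2(B(k))}^{2(1-\rho)}\le\Big(\sum_k\|u\|_{H^1(B(k))}^2\Big)^{\rho}\Big(\sum_k\|u\|_{L^2(B(k))}^2\Big)^{1-\rho}=\|u\|_{H^1}^{2\rho}\|u\|_{L^2}^{2(1-\rho)},
\end{equation*}
since the cubes tile $\R^3$. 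This yields the second, multiplicative inequality (2.3). To obtain the first inequality (2.2) I would simply apply Young's inequality $ab\le\rho\epsilon a^{1/\rho}+(1-\rho)\epsilon^{-\rho/(1-\rho)}b^{1/(1-\rho)}$ with $a=\|u\|_{H^1}^{2\rho}$ and $b=\|u\|_{L^2}^{2(1-\rho)}$ (this is the weighted Young inequality with conjugate exponents $1/\rho$, $1/(1-\rho)$ and parameter $\epsilon$), after absorbing the factor $M_B^{2\rho}$ appropriately — note that in (2.2) the constant $M_B^{2}$ appears to the first power multiplying $\epsilon$, which is consistent with putting $a=M_B^{2}\|u\|_{H^1}^{2}$ raised to the power $\rho$, i.e. absorbing $M_B^{2\rho}$ into $a^{1/\rho}$.

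The only genuinely delicate point is bookkeeping with the interpolation exponent: one must verify that with $\rho=3/(2\sigma)$ the identity $\frac{1}{2\sigma'}=\frac{\rho}{6}+\frac{1-\rho}{2}$ holds, equivalently $\frac{\sigma-1}{2\sigma}=\frac{1-2\rho/3}{2}$, which reduces to $\rho=3/(2\sigma)$ as claimed; and that $2\sigma'\in[2,6]$ exactly when $\sigma\ge3/2$, so the hypothesis $\sigma>3/2$ gives $\rho\in(0,1)$ strictly, which is what makes Young's inequality with these exponents legitimate. Everything else is a routine application of H\"older (twice) and Young once, so I expect no real obstacle; the argument is essentially the standard "uniformly local Sobolev inequality" packaged with an explicit constant.
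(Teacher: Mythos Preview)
Your argument is correct and is exactly the standard proof: tile $\R^3$ by unit cubes, apply H\"older and the $L^2$--$L^6$ interpolation with Sobolev on each cube, sum via H\"older for series, and deduce the additive form from the multiplicative one by Young's inequality. The paper itself does not give a proof but refers to Lemma~3.3 of \cite{PR2}, and your write-up is essentially what one finds there; the only bookkeeping to make explicit is that $u\in H^1_0(\Omega)$ is extended by zero to $H^1(\R^3)$ so that $\sum_k\|u\|_{H^1(B(k))}^2=|u|_{H^1}^2$, and that the integer-centred cubes $B(k)$ are among those defining $|\omega|_{L^\sigma_{\rm u}}$.
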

\begin{proof}See the proof of Lemma 3.3 in \cite{PR2}.\end{proof}

Let $\beta\in L^\sigma_{\rm u}(\R^3)$, with $\sigma>3/2$. Let us consider the following bilinear form defined on the space $H^1_0(\Omega)$:
\begin{equation}
a(u,v):=\int_\Omega\nabla u(x)\cdot\nabla v(x)\,dx+\int_\Omega\beta(x)u(x)v(x)\,dx,\quad u,v\in H^1_0(\Omega)
\end{equation}
Our first assumption is the following:
\begin{Hyp}\label{hyp1}
There exists $\lambda_1>0$ such that
\begin{equation}
\int_\Omega |\nabla u(x)|^2\,dx+\int_\Omega\beta(x)|u(x)|^2\,dx\geq \lambda_1 |u|^2_{L^2},\quad u\in H^1_0(\Omega).\label{f10}
\end{equation}
\end{Hyp}

\begin{Rem} Conditions on $\beta(x)$ under which Hypothesis \ref{hyp1} is satisfied are expounded e.g. in \cite{AB1,AB2}.
\end{Rem}

As a consequence of (\ref{f10}) and Proposition \ref{prop1}, we have:
\begin{Prop}\label{prop2} There exist two positive constants $\lambda_0$ and $\Lambda_0$ such that
\begin{equation}
\lambda_0|u|^2_{H^1}\leq\int_\Omega |\nabla u(x)|^2\,dx+\int_\Omega\beta(x)|u(x)|^2\,dx\leq \Lambda_0 |u|^2_{H^1},\quad u\in H^1_0(\Omega).
\end{equation}
The constants $\lambda_0$ and $\Lambda_0$ can be computed explicitly in terms of $\lambda_1$, $M_B$ and $|\beta|_{L^\sigma_{\rm u}}$.
\end{Prop}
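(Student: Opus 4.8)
The plan is to establish the two inequalities separately; in both cases the mechanism is to control the potential term $\int_\Omega\beta(x)|u(x)|^2\,dx$, whose sign is not definite, by means of Proposition~\ref{prop1} applied with $\omega=\beta$ (which lies in $L^\sigma_{\rm u}(\R^3)$ with $\sigma>3/2$, so that $\rho:=3/(2\sigma)\in(0,1)$). For the \emph{upper bound} I would simply write
\[
a(u,u)=\int_\Omega|\nabla u|^2\,dx+\int_\Omega\beta|u|^2\,dx\le |u|_{H^1}^2+\int_\Omega|\beta|\,|u|^2\,dx,
\]
and then apply the first inequality of Proposition~\ref{prop1} with $\epsilon=1$, together with $|u|_{L^2}\le|u|_{H^1}$; this yields the claim with the explicit value $\Lambda_0:=1+|\beta|_{L^\sigma_{\rm u}}(\rho M_B^2+1-\rho)$.

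The \emph{lower bound} is the substantive part. Since Hypothesis~\ref{hyp1} only controls $|u|_{L^2}$, I would combine it with a direct estimate of the Dirichlet integral through a convex combination: for a parameter $\theta\in(0,1)$ to be chosen,
\[
a(u,u)=\theta\,a(u,u)+(1-\theta)\,a(u,u)\ge\theta\lambda_1|u|_{L^2}^2+(1-\theta)\Bigl(|\nabla u|_{L^2}^2-\int_\Omega|\beta|\,|u|^2\,dx\Bigr).
\]
Now bound $\int_\Omega|\beta|\,|u|^2$ by the first inequality of Proposition~\ref{prop1} and use $|u|_{H^1}^2=|\nabla u|_{L^2}^2+|u|_{L^2}^2$ to split the right-hand side into a multiple of $|\nabla u|_{L^2}^2$ and a multiple of $|u|_{L^2}^2$. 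First fix $\epsilon>0$ so small that $\rho\epsilon M_B^2|\beta|_{L^\sigma_{\rm u}}\le 1/2$ (assuming $\beta\not\equiv0$, the remaining case being trivial); this keeps the coefficient of $|\nabla u|_{L^2}^2$ bounded below by $1/2$. Then choose $\theta$ close enough to $1$ that the coefficient of $|u|_{L^2}^2$, namely $\theta\lambda_1-(1-\theta)|\beta|_{L^\sigma_{\rm u}}\bigl(\rho\epsilon M_B^2+(1-\rho)\epsilon^{-\rho/(1-\rho)}\bigr)$, is strictly positive — possible because as $\theta\to1^-$ the first term tends to $\lambda_1>0$ and the second to $0$. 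Taking $\lambda_0$ to be the minimum of the two resulting positive coefficients gives $a(u,u)\ge\lambda_0|u|_{H^1}^2$, with $\epsilon$, $\theta$ and $\lambda_0$ all expressed through $\lambda_1$, $M_B$, $|\beta|_{L^\sigma_{\rm u}}$ and $\sigma$.

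The only delicate point — and hence the one I would be most careful about — is the order in which the two free parameters are chosen in the lower bound: one must first pick $\epsilon$ to guarantee that a fixed positive fraction of $|\nabla u|_{L^2}^2$ survives after absorbing the negative part of the potential into the gradient term, and only afterwards pick $\theta$ near $1$ to ensure that the leftover $|u|_{L^2}^2$ contribution coming from Hypothesis~\ref{hyp1} stays positive. Everything else is a routine bookkeeping exercise with the constants furnished by Proposition~\ref{prop1}.
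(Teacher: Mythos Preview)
Your argument is correct. The paper does not actually prove this proposition but merely refers to Lemma~4.2 in \cite{PR1}; your proof is the natural one obtained by combining Proposition~\ref{prop1} with Hypothesis~\ref{hyp1}, and is almost certainly what the cited lemma does as well.
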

\begin{proof}Cf Lemma 4.2 in \cite{PR1}\end{proof}
It follows from Proposition \ref{prop2} that the bilinear form $a(\cdot,\cdot)$ defines a scalar product in $H^1_0(\Omega)$,
equivalent to the standard one.

\begin{Notat} From now on, we set  $\langle\cdot,\cdot\rangle_{H^1_0}:=a(\cdot,\cdot)$ and we denote by $\|\cdot\|_{H^1_0}$ the norm associated with
$\langle \cdot,\cdot\rangle_{H^1_0}$. Also, we shall use the notation $\|\cdot\|_{L^p}$ to denote the $L^p$-norm in $L^p(\Omega)$,  $1\leq p\leq\infty$. \end{Notat}

Let $\mathbf A$ be the self-adjoint operator on $L^2(\Omega)$
defined by the differential operator $u\mapsto\beta(x) u-\Delta
u$.

Then $\mathbf A$ generates a
family $X^\kappa$, $\kappa\in\R$, of fractional power spaces with
$X^{-\kappa}$ being the dual of $X^\kappa$ for
$\kappa\in]0,+\infty[$. For $\kappa\in]0,+\infty[$, the space
$X^\kappa$ is a Hilbert space with respect to the scalar product
\begin{equation*}
\langle u,v\rangle_{X^\kappa}:=\langle{\mathbf A}^\kappa
u,{\mathbf A}^\kappa v\rangle_{L^2},\quad u,v\in X^\kappa.
\end{equation*}
Also, the space $X^{-\kappa}$ is a Hilbert space with respect to
the scalar product $\langle \cdot,\cdot\rangle_{X^{-\kappa}}$ dual
to the scalar product  $\langle \cdot,\cdot\rangle_{X^{\kappa}}$,
i.e.
\begin{equation*}
\langle u',v'\rangle_{X^{-\kappa}}=\langle R^{-1}_\kappa
u',R^{-1}_\kappa v'\rangle_{X^\kappa},\quad u,v\in X^{-\kappa},
\end{equation*}
where $R_\kappa\colon X^\kappa\to X^{-\kappa}$ is the Riesz
isomorphism $u\mapsto\langle\cdot,u\rangle_{X^\kappa}$.

We make the following assumption:

\begin{Hyp}\label{hyp4}
The open set  $\Omega$ is a uniformly $C^2$
domain in the sense of Browder \cite[p. 36]{Brow}.
\end{Hyp}

As a consequence,  by elliptic
regularity we have that $D(-\Delta)=H^2(\Omega)\cap
H^1_0(\Omega)\subset L^\infty(\Omega)$. In this situation,
the assignment $u\mapsto
\beta(x) u$ defines a relatively bounded perturbation of $-\Delta$
and therefore $D(-\Delta+\beta(x))=H^2(\Omega)\cap H^1_0(\Omega)$.
It follows that $X^\kappa\subset L^\infty(\Omega)$ for
$\kappa>3/4$ (see \cite[Th. 1.6.1]{He}).

 We write
\begin{equation*}
H_\kappa=X^{\kappa/2},\quad\kappa\in\R.
\end{equation*}
Note that $H_0=L^2(\Omega)$, $H_1=H^1_0(\Omega)$,
$H_{-1}=H^{-1}(\Omega)$ and $H_2=H^2(\Omega)\cap H^1_0(\Omega)$.

For $\kappa\in\R$ the operator $\mathbf A$ induces a self-adjoint
operator $\mathbf A_\kappa\colon H_{\kappa+2}\to H_{\kappa}$. In
particular $\mathbf A=\mathbf A_0$. Moreover,
\begin{equation*}
\langle u,v\rangle_{H^1_0}=\langle\mathbf A_0
u,v\rangle_{L^2},\quad u\in D(\mathbf A_0),\,v\in H^1_0(\Omega).
\end{equation*}

For  $\kappa\in\R$ set $Z_\kappa:=H_{\kappa+1}\times H_{\kappa}$.
For $\alpha>0$ define the linear operator $\mathbf
B_{\kappa}\colon Z_{\kappa+1}\to Z_{\kappa}$ by
\begin{equation*}
\mathbf B_{\kappa}(u,v):=(v,-(\alpha v+\mathbf
A_\kappa u)),\quad (u,v)\in Z_{\kappa+1}.
\end{equation*}
It follows that $\mathbf B_{\kappa}$ is $m$-dissipative
on $Z_{\kappa}$ (cf the proof of Prop. 3.6 in \cite{PR2}).
Therefore, by the Hille-Yosida-Phillips theorem (see e.g.
\cite{CH}), $\mathbf B_{\kappa}$ is the infinitesimal
generator of a $C^0$-semigroup $\mathbf T_{\kappa}(t)$,
$t\in[0,+\infty[$, on $Z_{\kappa}$.

Given  a function $g\colon \Omega\times \R\to \R$, we denote by
$\hat g$  the Nemitski operator which associates with every
function $u\colon \Omega\to \R$ the function $\hat g(u)\colon
\Omega\to \R$ defined by
 $$
  \hat g(u)(x)= g(x,u(x)),\quad x\in \Omega.
 $$
If $I\subset \R$,  $X$ is a normed spaces
and if $u\colon I\to X$ is a function which is differentiable as a
function into $X$ then we denote its $X$-valued derivative by
$(\partial_t \mid X)\,u$. Similarly, if $X$ is a Banach space and
$u\colon I\to X$ is integrable as a function into $X$, then we
denote its $X$-valued integral by $\int _I u(t)\,(dt\mid X)$. If
$X$ and $Y$ are Banach spaces, we denote by $\mathcal L(X,Y)$ the
space of bounded linear operators from $X$ to $Y$. If $X=Y$ we
write just $\mathcal L(X)$.

\begin{Hyp}\label{hyp2}\
\begin{enumerate}
\item $f\colon\Omega\times\R\to\R$ is such that, for every $u\in\R$, $f(\cdot,u)$ is measurable and $f(\cdot,0)\in L^2(\Omega)$;
\item for a.e. $x\in\Omega$, $f(x,\cdot)$ is of class $C^2$, $\partial_{u}f(\cdot,0)\in L^\infty(\Omega)$ and there exists a constants $C\geq0$
such that $$ |\partial_{uu} f(x,u))|\leq C(1+|u|), \quad
(x,u)\in\Omega\times\R. $$
\end{enumerate}\end{Hyp}

The main properties of the Nemitski operator associated with $f$
are collected in the following Proposition, whose proof is left to
the reader.
\begin{Prop}\label{newprop1} Assume Hypothesis \ref{hyp2}. Then
$\hat f\colon H^1_0(\Omega)\to L^2(\Omega)$ is continuously
differentiable, $D\hat f(u)[v](x)=\partial_u f(x,u(x))v(x)$ for
$u$, $v\in H^1_0(\Omega)$, and there exists a positive constant
$\tilde C>0$ such  that the following estimates hold:
\begin{equation}\label{n1}
\|\hat f(u)\|_{L^2}\leq\tilde C(1+\|u\|_{H^1_0}^3),\quad u\in
H^1_0(\Omega)
\end{equation}
\begin{equation}\label{n2}
\|D\hat f (u)\|_{{\mathcal L}(H^1_0,L^2)}\leq \tilde
C(1+\|u\|_{H^1_0}^2),\quad u\in H^1_0(\Omega)
\end{equation}
\begin{multline}\label{n3}
\|D\hat f (u_1)-D\hat f (u_2)\|_{{\mathcal L}(H^1_0,L^2)}\leq
\tilde C(1+\|u_1\|_{H^1_0}+\|u_2\|_{H^1_0})\|u_1-u_2\|_{H^1_0},\\
u_1,u_2\in H^1_0(\Omega).
\end{multline}
If $u\in H^1_0(\Omega)$ and $v\in L^2(\Omega)$, then
$\widehat{\partial_u f}(u)\cdot v\in H^{-1}(\Omega)$ and the
following estimates hold:
\begin{equation}\label{n4}
\|\widehat{\partial_u f}(u)\|_{\mathcal L(L^2,H^{-1})}\leq\tilde
C(1+\|u\|_{H^1_0}^2),\quad u\in H^1_0(\Omega)
\end{equation}
\begin{multline}\label{n5}
\|\widehat{\partial_u f}(u_1)-\widehat{\partial_u
f}(u_2)\|_{\mathcal L(L^2,H^{-1})}\leq\tilde
C(1+\|u_1\|_{H^1_0}+\|u_2\|_{H^1_0})\|u_1-u_2\|_{H^1_0},\\
u_1,u_2\in H^1_0(\Omega).
\end{multline}
\qed
\end{Prop}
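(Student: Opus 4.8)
The plan is to reduce every assertion to two pointwise facts --- polynomial growth bounds on $f$ and $\partial_u f$, and a Lipschitz bound on $\partial_u f$ --- combined with H\"older's inequality and the Sobolev embeddings $H^1_0(\Omega)\hookrightarrow L^p(\Omega)$ for $2\le p\le6$. First I would integrate the bound of Hypothesis \ref{hyp2}(2): from $\partial_u f(x,u)=\partial_u f(x,0)+\int_0^u\partial_{uu}f(x,s)\,ds$ and $f(x,u)=f(x,0)+\int_0^u\partial_u f(x,s)\,ds$ one obtains, for a.e.\ $x\in\Omega$ and all $u,u_1,u_2\in\R$,
\begin{gather*}
|\partial_u f(x,u)|\le C_1(1+|u|^2),\qquad |f(x,u)|\le|f(x,0)|+C_1(|u|+|u|^3),\\
|\partial_u f(x,u_1)-\partial_u f(x,u_2)|\le C_1(1+|u_1|+|u_2|)\,|u_1-u_2|,
\end{gather*}
with $C_1$ depending only on $C$ and $\|\partial_u f(\cdot,0)\|_{L^\infty}$ --- the last bound coming from the mean value theorem applied to $s\mapsto\partial_u f(x,s)$ and the growth of $\partial_{uu}f$. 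Measurability of $x\mapsto f(x,u(x))$ and $x\mapsto\partial_u f(x,u(x))$ for measurable $u$ is the standard Carath\'eodory property, so $\hat f$ and $\widehat{\partial_u f}$ are well defined.

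Next I would record the embedding inequalities: extending by zero gives $H^1_0(\Omega)\hookrightarrow H^1(\R^3)\hookrightarrow L^6(\R^3)$, and since $\|w\|_{L^2}\le\lambda_1^{-1/2}\|w\|_{H^1_0}$ by Hypothesis \ref{hyp1} and Proposition \ref{prop2}, interpolation gives $\|w\|_{L^p}\le c_p\|w\|_{H^1_0}$ for every $p\in[2,6]$. With these in hand, (\ref{n1}) is immediate from $\|\hat f(u)\|_{L^2}\le\|f(\cdot,0)\|_{L^2}+C_1(\|u\|_{L^2}+\|u\|_{L^6}^3)$; for (\ref{n2}) one uses $D\hat f(u)[v](x)=\partial_u f(x,u(x))v(x)$ and $\|\partial_u f(\cdot,u)v\|_{L^2}\le C_1(\|v\|_{L^2}+\|u\|_{L^6}^2\|v\|_{L^6})$ via H\"older with $\frac13+\frac16=\frac12$ (alternatively via Proposition \ref{prop1} with $\omega=|u|^2\in L^3_{\rm u}(\R^3)$); for (\ref{n4}) one pairs $\widehat{\partial_u f}(u)\cdot v$ against $\phi\in H^1_0(\Omega)$ and uses H\"older with $\frac16+\frac16+\frac12+\frac16=1$. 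Estimates (\ref{n3}) and (\ref{n5}) follow from the Lipschitz bound above by splitting $(1+|u_1|+|u_2|)|u_1-u_2|$ into its three summands and distributing the factors of each resulting product among $L^6(\Omega)$ by H\"older --- keeping $v$ in $L^2(\Omega)$ in the case of (\ref{n5}) --- after which every $L^p$-factor is bounded by the $H^1_0$-norm. The one subtlety throughout is that, $\Omega$ being possibly unbounded, no inclusion $L^6(\Omega)\subset L^2(\Omega)$ is available, so the zeroth-order terms ``$1$'' in the bounds above must always be paired against an $L^2$-norm, controlled through Hypothesis \ref{hyp1}; this is exactly where that hypothesis enters.

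For the $C^1$ assertion, the mean value theorem together with an (\ref{n2})-type bound shows $\hat f$ is locally Lipschitz from $H^1_0(\Omega)$ into $L^2(\Omega)$, hence continuous. To prove Fr\'echet differentiability at $u$ with derivative $v\mapsto\widehat{\partial_u f}(u)\cdot v$ --- a bounded operator by (\ref{n2}) --- I would estimate the remainder using the pointwise identity
\begin{multline*}
f(x,u(x)+w(x))-f(x,u(x))-\partial_u f(x,u(x))w(x)\\
=\Bigl(\int_0^1\bigl(\partial_u f(x,u(x)+\theta w(x))-\partial_u f(x,u(x))\bigr)\,d\theta\Bigr)\,w(x):
\end{multline*}
by the same computation that proves (\ref{n3}), applied with $u_1=u+\theta w$ and $u_2=u$, its $L^2$-norm is bounded by $\tilde C(1+\|u\|_{H^1_0}+\|w\|_{H^1_0})\|w\|_{H^1_0}^2=o(\|w\|_{H^1_0})$. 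Continuity of $u\mapsto D\hat f(u)$ in $\mathcal L(H^1_0,L^2)$ is then precisely (\ref{n3}). No step is technically deep; the only point needing attention is the unbounded-domain bookkeeping just mentioned, i.e.\ never using $L^6\hookrightarrow L^2$ and routing all zeroth-order contributions through Hypothesis \ref{hyp1}.
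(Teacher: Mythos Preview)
Your argument is correct and is precisely the standard computation the paper has in mind --- the paper itself gives no proof of this proposition, simply leaving it to the reader with a \qed. One small quibble: the parenthetical alternative ``via Proposition~\ref{prop1} with $\omega=|u|^2$'' for (\ref{n2}) does not quite work as stated, since that proposition controls $\int|\omega|\,|v|^2$ rather than $\int|\omega|^2|v|^2$, so it would not directly yield the needed bound on $\||u|^2 v\|_{L^2}$; but your primary H\"older route with exponents $\frac13+\frac16=\frac12$ is correct and this aside can simply be dropped.
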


We consider the following semi-linear damped wave equation:
\begin{equation}\label{eq1}
\begin{aligned}
u_{tt}+\alpha u_t+\beta(x)u-\Delta u
&=f(x,u),&&(t,x)\in[0,+\infty[\times\Omega,\\
u&=0,&&(t,x)\in[0,+\infty[\times\partial\Omega
\end{aligned}\end{equation}
with Cauchy data $u(0)=u_0$, $u_t(0)=v_0$.

We recall the following classical result (see e.g. Theorem II.1.3
in \cite{Gold}):

\begin{Theorem}\label{goldstein}
Let $X$ be a Banach space and let $B\colon D(B)\subset X\to X$ be
the infinitesimal generator of a $C^0$-semigroup of linear
operators $T(t)$, $t\in\R_+$. Consider the abstract Cauchy problem
\begin{equation}\label{goldstein-cauchy-1}
\begin{cases}\dot u=B u(t)+f(t),&t\in\R_+\\
u(0)=u_0&
\end{cases}
\end{equation}
Assume that $u_0\in D(B)$ and that either
\begin{enumerate}
\item $f\in C(\R_+,X)$ takes values in $D(B)$ and $Bf\in
C(\R_+,X)$, or
\item $f\in C^1(\R_+,X)$.
\end{enumerate}
Then (\ref{goldstein-cauchy-1}) has a unique solution $u\in
C^1(\R_+)$ with values in $D(B)$. The solution is given by
\begin{equation}\label{goldstein-cauchy-2}
u(t)=T(t)u_0+\int_0^t T(t-s)f(s)\,ds.
\end{equation}\qed
\end{Theorem}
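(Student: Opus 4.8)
The plan is to establish uniqueness by an argument internal to the semigroup and existence by checking directly that the function given by the variation-of-constants formula (\ref{goldstein-cauchy-2}) is a classical solution. For \emph{uniqueness}, if $u_1,u_2\in C^1(\R_+)$ are solutions with values in $D(B)$, then $w:=u_1-u_2$ satisfies $\dot w=Bw$, $w(0)=0$, and for fixed $t>0$ the map $\phi(s):=T(t-s)w(s)$ on $[0,t]$ is $C^1$ with $\phi'(s)=-T(t-s)Bw(s)+T(t-s)\dot w(s)=0$, using that $T(\tau)$ commutes with $B$ on $D(B)$; hence $\phi$ is constant and $w(t)=\phi(t)=\phi(0)=T(t)w(0)=0$.

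For \emph{existence}, write $v(t):=\int_0^t T(t-s)f(s)\,ds$, so that the candidate solution is $u(t)=T(t)u_0+v(t)$. Since $u_0\in D(B)$ the orbit $t\mapsto T(t)u_0$ is $C^1$ into $X$, takes values in $D(B)$, and has derivative $BT(t)u_0=T(t)Bu_0$, so it suffices to show $v\in C^1(\R_+,X)$, $v(t)\in D(B)$ and $\dot v(t)=Bv(t)+f(t)$, and then add the two pieces and note $u(0)=u_0$. The key identity is the decomposition
\[
\frac{v(t+h)-v(t)}{h}=\int_0^t T(t-s)\,\frac{T(h)-I}{h}f(s)\,ds+\frac1h\int_t^{t+h}T(t+h-s)f(s)\,ds,
\]
which follows from $T(h)T(t-s)=T(t+h-s)$ and whose first term on the right equals $\frac{T(h)-I}{h}v(t)$; the last term tends to $f(t)$ as $h\to0^+$ by continuity of $f$ and strong continuity of $T$. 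Under hypothesis (1), for each $s$ one has $\frac{T(h)-I}{h}f(s)=\frac1h\int_0^h T(\sigma)Bf(s)\,d\sigma\to Bf(s)$, with norm bounded uniformly for $h\in(0,1]$ by $(\sup_{[0,1]}\|T\|)\sup_{[0,t]}\|Bf\|$, so dominated convergence gives $\frac{T(h)-I}{h}v(t)\to\int_0^t T(t-s)Bf(s)\,ds$; this shows $v(t)\in D(B)$, that $Bv(t)=\int_0^t T(t-s)Bf(s)\,ds$ (continuous in $t$), and that $v$ has right derivative $Bv(t)+f(t)$, continuous in $t$, whence $v\in C^1$ by the standard fact that a continuous function with continuous one-sided derivative is $C^1$. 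Under hypothesis (2) I would instead first differentiate $v(t)=\int_0^t T(\tau)f(t-\tau)\,d\tau$ directly, using $f\in C^1$ and the Leibniz rule for Bochner integrals, obtaining $\dot v(t)=T(t)f(0)+\int_0^t T(\tau)\dot f(t-\tau)\,d\tau$, which is continuous; then letting $h\to0^+$ in the displayed identity and invoking the characterization that $x\in D(B)$ exactly when $\lim_{h\to0^+}h^{-1}(T(h)x-x)$ exists yields $v(t)\in D(B)$ and $Bv(t)=\dot v(t)-f(t)$.

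The only genuinely delicate point — and the one I expect to be the main obstacle — is the interchange of the closed, unbounded operator $B$ with the Bochner integral (equivalently, differentiating under the integral sign against $B$): in case (1) it is handled by the dominated-convergence step above, or by applying the closedness of $B$ to $v(t)=\int_0^t T(\tau)f(t-\tau)\,d\tau$, whose integrand has image $T(\tau)Bf(t-\tau)$ under $B$; in case (2) it is handled by the $D(B)$-characterization applied to the convergent difference quotients of $v$. Everything else — continuity of the integrands, the change of variables $\tau=t-s$, the Leibniz rule, and reassembling $u=T(\cdot)u_0+v$ with $u(0)=u_0$ — is routine.
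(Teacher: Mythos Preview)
Your proof is correct and follows the standard semigroup-theoretic route. Note, however, that the paper does not prove this theorem at all: it is stated as a classical result with a reference to Goldstein's book (Theorem II.1.3 in \cite{Gold}) and closed with a \qed, so there is no ``paper's own proof'' to compare against. Your argument is essentially the one found in standard references (Goldstein, Pazy, Cazenave--Haraux): uniqueness via differentiating $s\mapsto T(t-s)w(s)$, existence by analyzing the convolution term $v(t)=\int_0^t T(t-s)f(s)\,ds$ through the difference-quotient decomposition, with the closedness of $B$ (or, equivalently, the $D(B)$-characterization via limits of $(T(h)-I)/h$) doing the work of pushing $B$ through the Bochner integral. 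The handling of the two hypotheses is also the standard split---dominated convergence under (1), Leibniz after the change of variables $\tau=t-s$ under (2)---and the passage from a continuous right derivative to $C^1$ is a legitimate elementary lemma. Nothing is missing.
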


Using Theorem \ref{goldstein}, we rewrite equation (\ref{eq1}) as
an integral evolution equation in the space
$Z_0=H^1_0(\Omega)\times L^2(\Omega)$, namely
\begin{equation}\label{eq2}
(u(t),v(t))=\mathbf T_{0}(t)(u_0,v_0)+\int_0^t \mathbf
T_{0}(t-p)(0,\hat f(u(p)))\,(dp\mid Z_0).
\end{equation}
Equation (\ref{eq2}) is called the {\em mild formulation} of
(\ref{eq1}) and  solutions of (\ref{eq2}) are called {\em mild
solutions} of (\ref{eq1}). Note that by Proposition \ref{prop1}
the nonlinear operator $(u,v)\mapsto(0,\hat f(u))$ is Lipschitz
continuous from $Z_0$ into itself. A classical Picard iteration
argument shows that, if $(u_0,v_0)\in Z_0$, then (\ref{eq2})
possesses a unique continuous maximal solution
$(u(\cdot),v(\cdot))\colon[0,t_{\mathrm{max}}[\to Z_0$ (see Theor.
4.3.4 and Prop. 4.3.7 in \cite{CH}). We thus obtain a local
semiflow on $Z_0$, which we denote by $\Pi(t)U_0$, $U_0=(u_0,v_0)\in Z_0$, $t\in[0, t_{\mathrm{max}}(U_0)[$. Notice that the solution $(u(\cdot),v(\cdot))$
of (\ref{eq2}) also satisfies
\begin{equation}\label{eq2-bis}
(u(t),v(t))=\mathbf T_{-1}(t)(u_0,v_0)+\int_0^t \mathbf
T_{-1}(t-p)(0,\hat f(u(p)))\,(dp\mid Z_{-1}).
\end{equation}
Therefore, it follows from Theorem \ref{goldstein} that
$(u(\cdot),v(\cdot))$ is continuously differentiable into $Z_{-1}$
and
\begin{equation}\label{eq3}
(\partial_t\mid Z_{-1})(u(t),v(t))=\mathbf
B_{-1}(u(t),v(t))+(0,\hat f(u(t))).
\end{equation}
In particular, one has
\begin{equation}\label{eq4}\begin{cases}
(\partial_t\mid H_0)u(t)=v(t)\\ (\partial_t\mid
H_{-1})v(t)=-\alpha v(t)-\mathbf A_{-1}u(t)+\hat f(u(t))
\end{cases}
\end{equation}
\begin{Def}\label{def1}
A function $(u(\cdot), v(\cdot))\colon\R\to Z_0$ is called a {\em
full solution} of the semiflow $\Pi$ generated by (\ref{eq2}) iff, for every $s$, $t\in\R$, with
$s\leq t$, one has
\begin{equation*}
(u(t),v(t))=\Pi(t-s)(u(s),v(s))
\end{equation*}
\end{Def}

\begin{Def}\label{newdef1} A subset $\mathcal I$ of $Z_0$ is called {\em
invariant} for the semiflow generated by (\ref{eq2}) if for every
$(u_0,v_0)\in\mathcal I$ there exists a full solution
$(u(\cdot),v(\cdot))$ of (\ref{eq2}) with $(u(0),v(0))=(u_0,v_0)$
and $(u(t),v(t))\in\mathcal I$ for all $t\in\R$.\end{Def}

From now on we assume that $\mathcal I\subset Z_0$ is a compact invariant subset of the semiflow $\Pi$.

\begin{Notat}If $\mathcal B$ is  a Banach space such that $\mathcal I\subset \mathcal B$, we define
\begin{equation}
|\mathcal I |_{\mathcal B}:=\max\{ \|u\|_{\mathcal B}\mid u\in\mathcal I\}.
\end{equation}
\end{Notat}

We recall the following result:

\begin{Theorem}[\bf{cf Corollaries 2.10 and 2.13 in \cite{P1}}]\label{newth1}
Assume that Hypotheses \ref{hyp1}, \ref{hyp4} and \ref{hyp2} are
satisfied. Let $\mathcal I\subset Z_0$ be a compact invariant set
of the semiflow generated by (\ref{eq2}). Then $\mathcal I$ is a
bounded subset of $Z_1$. Moreover, $|\mathcal I|_{Z_1}$ can be
explicitly estimated in terms of $|\mathcal I|_{Z_0}$ and of the
constants in Hypotheses \ref{hyp1} and \ref{hyp4}.
\end{Theorem}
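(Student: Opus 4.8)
The plan is to reduce everything to energy bounds on the first two time derivatives of $u$. First I would show that along any complete bounded trajectory $(u(\cdot),v(\cdot))\subset\mathcal I$ the velocity $v=u_t$ stays bounded in $H_1=H^1_0(\Omega)$ and the acceleration $v_t=u_{tt}$ stays bounded in $H_0=L^2(\Omega)$, with constants depending only on $|\mathcal I|_{Z_0}$ and on the data $\alpha,\lambda_1,M_B,|\beta|_{L^\sigma_{\rm u}}$. Granting this, one rewrites the elliptic part of \eqref{eq4} as $\mathbf A_0 u(t)=\hat f(u(t))-\alpha v(t)-v_t(t)$, whose right-hand side is then bounded in $L^2(\Omega)$ (using \eqref{n1} for $\hat f(u(t))$), and invokes Hypothesis \ref{hyp4} together with the fact that $u\mapsto\beta(x)u$ is a relatively bounded perturbation of $-\Delta$ to conclude $u(t)\in H_2=H^2(\Omega)\cap H^1_0(\Omega)$ with an explicit bound; hence $(u(t),v(t))\in Z_1$ and $|\mathcal I|_{Z_1}$ is estimated.

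The two analytic ingredients I would set up first are: (i) exponential stability of the linear propagator --- because of the coercivity granted by Hypothesis \ref{hyp1} and Proposition \ref{prop2} and of the damping $\alpha u_t$, a standard modified-energy argument gives $\|\mathbf T_\kappa(t)\|_{\mathcal L(Z_\kappa)}\le\mathcal M e^{-\delta t}$ with $\mathcal M,\delta>0$ computable from the data; (ii) backward representation formulas --- since $\mathcal I$ is bounded in $Z_0$ and $\hat f(u(\cdot))$ is bounded in $L^2$, letting the initial time go to $-\infty$ in the mild formula \eqref{eq2} gives
\begin{equation*}
(u(t),v(t))=\int_{-\infty}^{t}\mathbf T_0(t-p)\bigl(0,\hat f(u(p))\bigr)\,(dp\mid Z_0),\qquad t\in\R,
\end{equation*}
and, after differentiating \eqref{eq2-bis} (so that the pair $(v,v_t)$ becomes a complete solution, bounded in $Z_{-1}$, of the linearized damped wave equation $w_{tt}+\alpha w_t+\mathbf A w=\widehat{\partial_u f}(u)\,w$),
\begin{equation*}
(v(t),v_t(t))=\int_{-\infty}^{t}\mathbf T_{-1}(t-p)\bigl(0,\widehat{\partial_u f}(u(p))\,v(p)\bigr)\,(dp\mid Z_{-1}),\qquad t\in\R.
\end{equation*}

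The heart of the proof is a regularity bootstrap on this last formula. I would quantify the forcing carefully: by Hypothesis \ref{hyp2}, $|\partial_u f(x,u)|\le\|\partial_u f(\cdot,0)\|_{L^\infty}+C(|u|+|u|^2)$, so $\partial_u f(\cdot,u(p))$ is bounded in $L^\sigma_{\rm u}(\R^3)$ for every $\sigma\le 3$, uniformly over $\mathcal I$; applying Proposition \ref{prop1} with $\omega=\partial_u f(\cdot,u(p))$ (and with its powers) and using the estimates \eqref{n2}--\eqref{n5}, one can bound $\widehat{\partial_u f}(u(p))\,v(p)$ in $H_\mu$ by a quantity of the form $C\,\|v(p)\|_{H_{\mu+1}}^{\rho}\|v(p)\|_{H_\mu}^{1-\rho}$ with a genuine exponent $\rho<1$. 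Inserting this into the Duhamel integral, using the exponential weight $e^{-\delta(t-p)}$ to control the convolution, and --- to keep the constants independent of the trajectory and to close the loop --- running the argument on a short interval $[\tau,\tau+T]$ on which the resulting linear integral operator is a contraction, one upgrades $(v,v_t)$ level by level from $Z_{-1}$ to $Z_0$. Together with the elliptic step this finishes the proof and makes the bound explicit in $|\mathcal I|_{Z_0}$.

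The hard part is exactly the critical (cubic) growth of $f$. The position component of the damped-wave propagator gains precisely one derivative on the forcing, and in the cubic case this gain is exactly cancelled by the derivative lost in estimating the product $\partial_u f(x,u)\,v$ at the endpoint Sobolev exponent, so a direct iteration stalls at $Z_{-1}$. Breaking this deadlock is where I expect the real work to be: one must squeeze out the slack hidden in the $C^2$-bound on $f$ (the quadratic vanishing of $\partial_u f(x,u)-\partial_u f(x,0)$ at $u=0$), rely on the uniformly-local estimates of Proposition \ref{prop1} --- which take over the role of the global Sobolev embeddings that fail on the unbounded domain --- and exploit the smallness of $T$; this is precisely the point handled in \cite{P1}, and also where the dependence of $|\mathcal I|_{Z_1}$ on $|\mathcal I|_{Z_0}$ is produced.
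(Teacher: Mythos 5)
Your overall architecture is sensible and is indeed the standard one for results of this type: represent the complete bounded trajectory through $\mathcal I$ by a tail Duhamel integral using the exponential decay of $\mathbf T_\kappa$, differentiate to view $(v,v_t)=(u_t,u_{tt})$ as a complete bounded solution of the linearized equation, bootstrap its regularity up the scale $Z_\kappa$, and finish with elliptic regularity for $\mathbf A_0 u=\hat f(u)-\alpha v-v_t$ (this last step, using \eqref{n1} and Hypothesis \ref{hyp4}, is fine). Note, however, that the paper does not prove Theorem \ref{newth1} at all: it quotes it from Corollaries 2.10 and 2.13 of \cite{P1}, so the only thing your sketch can be measured against is whether it actually closes the argument that \cite{P1} is cited for.

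It does not, and you say so yourself: the decisive step is the critical-growth bootstrap, and your text explicitly defers it (``breaking this deadlock is where I expect the real work to be \dots this is precisely the point handled in \cite{P1}''). Concretely, with $|\partial_u f(x,u)|\lesssim \|\partial_u f(\cdot,0)\|_{L^\infty}+C(|u|+|u|^2)$ and $u$ only bounded in $H^1_0\subset L^6$, the multiplier $\widehat{\partial_u f}(u(p))$ is an $L^3$-type object, and the one derivative gained by the position component of the propagator is exactly consumed by the derivative lost in the product $\widehat{\partial_u f}(u)v$; the interpolation inequality you assert, $\|\widehat{\partial_u f}(u(p))v(p)\|_{H_\mu}\le C\|v(p)\|_{H_{\mu+1}}^{\rho}\|v(p)\|_{H_\mu}^{1-\rho}$ with $\rho<1$, is precisely what fails (or at best degenerates to $\rho=1$) at the critical exponent, and neither Proposition \ref{prop1} (which controls quadratic forms $\int|\omega||u|^2$, not fractional-norm products) nor the estimates \eqref{n2}--\eqref{n5} (which only give the $H^1_0\to L^2$ and $L^2\to H^{-1}$ endpoints) yields it. The two devices you invoke to break the deadlock are not developed and, as stated, do not obviously help: a contraction argument on a short interval $[\tau,\tau+T]$ for a linear equation gives uniqueness and bounds in the space you already work in, not a gain of regularity (asserting the fixed point lies in the better space presupposes exactly what is to be proved), and ``squeezing the slack from the $C^2$-bound'' is the whole content of the cited reference, not a step you have supplied. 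So the proposal is an outline of the right strategy with the core of the proof missing; as a proof of Theorem \ref{newth1} it has a genuine gap, and the claimed explicit dependence of $|\mathcal I|_{Z_1}$ on $|\mathcal I|_{Z_0}$ is likewise produced only in the step you have not carried out.
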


Let $\bar U_0=(\bar u_0,\bar v_0)\in\mathcal I$, and let $\bar U(t)=(\bar u(t),\bar v(t))$, $t\in\R$, be the full bounded solution through $\bar U_0$.
Given $H_0=(h_0,k_0)\in Z_0$, let us denote by $\mathcal U(\bar U_0; t) H_0$ the mild solution of
\begin{equation}\label{na-eq1}
(h(t),k(t))=\mathbf T_{0}(t)(h_0,k_0)+\int_0^t \mathbf
T_{0}(t-p)(0,\widehat{\partial_u  f}(\bar u(p))h(p))\,(dp\mid Z_0).
\end{equation}

Notice that $\mathcal U(\bar U_0; t)$ coincides with the restriction to $Z_0$ of the \emph{
evolution family} $\mathbf U_{-1}(t,s)$ generated in $Z_{-1}$ by the family $\mathbf B_{-1}+\mathbf C_{-1}(t)$, $t\in\R$, where $\mathbf C_{-1}(t)(h,k):=(0,\widehat{\partial_u  f}(\bar u(t))h)$
(see \cite{P1} and \cite{K}).

A standard computation using Gronwall's inequality and Proposition \ref{newprop1} leads to the following:

\begin{Prop}\label{newprop2} For every $t\geq0$,
\begin{equation}
\sup_{\bar U_0\in\mathcal I}\|\mathcal U(\bar U_0;t)\|_{\mathcal L(Z_0,Z_0)}<+\infty,
\end{equation}
and
\begin{equation}
\lim_{\epsilon\to 0}\sup_{\substack{\bar U_1,\bar U_2\in\mathcal I\\0<\|\bar U_1-\bar U_2\|_{Z_0}<\epsilon}}
\frac{\|\Pi(t)(\bar U_2)-\Pi(t)(\bar U_1)-\mathcal U(\bar U_1;t)(\bar U_2-\bar U_1)\|_{Z_0}}{\|\bar U_2-\bar U_1\|_{Z_0}}=0,
\end{equation}
where $\bar Ui=(\bar u_i,\bar v_i)$, $i=1,2,3$.\qed
\end{Prop}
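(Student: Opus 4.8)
The plan is to run three successive Gronwall estimates based on the mild formulations (\ref{eq2}) and (\ref{na-eq1}) and on the bounds of Proposition \ref{newprop1}. Throughout I use that $\mathbf B_0$ is $m$-dissipative on $Z_0$, so that $\mathbf T_0(t)$ is a contraction semigroup, $\|\mathbf T_0(t)\|_{\mathcal L(Z_0)}\le 1$ for $t\ge0$. Since $\mathcal I$ is compact it is bounded in $Z_0$; put $R:=|\mathcal I|_{Z_0}$ and $K:=\tilde C(1+R^2)$, with $\tilde C$ as in Proposition \ref{newprop1}. Because any full solution $(\bar u(\cdot),\bar v(\cdot))$ through a point of $\mathcal I$ stays in $\mathcal I$, one has $\|\bar u(p)\|_{H^1_0}\le R$ for all $p$, and, identifying $\widehat{\partial_u f}(\bar u(p))$ with $D\hat f(\bar u(p))\in\mathcal L(H^1_0,L^2)$, estimate (\ref{n2}) gives $\|\widehat{\partial_u f}(\bar u(p))\|_{\mathcal L(H^1_0,L^2)}\le K$, uniformly in $\bar U_0\in\mathcal I$.

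For the first assertion I would write $(h(t),k(t)):=\mathcal U(\bar U_0;t)H_0$, take $Z_0$-norms in (\ref{na-eq1}), and use $\|(0,\widehat{\partial_u f}(\bar u(p))h(p))\|_{Z_0}=\|\widehat{\partial_u f}(\bar u(p))h(p)\|_{L^2}\le K\|h(p)\|_{H^1_0}\le K\|(h(p),k(p))\|_{Z_0}$ to get
\begin{equation*}
\|(h(t),k(t))\|_{Z_0}\le\|H_0\|_{Z_0}+K\int_0^t\|(h(p),k(p))\|_{Z_0}\,dp .
\end{equation*}
Gronwall's inequality then yields $\|\mathcal U(\bar U_0;t)\|_{\mathcal L(Z_0)}\le e^{Kt}$, and since $K$ depends only on $R$ this bound is uniform in $\bar U_0\in\mathcal I$. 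Running the same computation on the difference of two solutions of (\ref{eq2}), with (\ref{n2}) applied along the segment joining them (which remains in the $H^1_0$-ball of radius $R$), produces the Lipschitz estimate $\|\Pi(t)\bar U_2-\Pi(t)\bar U_1\|_{Z_0}\le e^{Kt}\|\bar U_2-\bar U_1\|_{Z_0}$ for $\bar U_1,\bar U_2\in\mathcal I$, $t\ge0$ (here $\Pi(t)$ is globally defined on $\mathcal I$ by invariance).

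For the second assertion, fix $t\ge0$ and $\bar U_1,\bar U_2\in\mathcal I$ and set $(u_i(\cdot),v_i(\cdot)):=\Pi(\cdot)\bar U_i$, $(h(\cdot),k(\cdot)):=\mathcal U(\bar U_1;\cdot)(\bar U_2-\bar U_1)$, $\delta(p):=u_2(p)-u_1(p)$, $w(p):=\delta(p)-h(p)$, $z(p):=v_2(p)-v_1(p)-k(p)$. Since $\hat f\in C^1(H^1_0,L^2)$, the fundamental theorem of calculus along the segment from $u_1(p)$ to $u_2(p)$ gives $\hat f(u_2(p))-\hat f(u_1(p))=D\hat f(u_1(p))[\delta(p)]+r(p)$, with $r(p):=\int_0^1\bigl(D\hat f(u_1(p)+\theta\delta(p))-D\hat f(u_1(p))\bigr)[\delta(p)]\,d\theta$, and $D\hat f(u_1(p))[\delta(p)]=\widehat{\partial_u f}(\bar u_1(p))\delta(p)$. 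Subtracting the three mild formulations then gives
\begin{equation*}
(w(t),z(t))=\int_0^t\mathbf T_0(t-p)\bigl(0,\widehat{\partial_u f}(\bar u_1(p))w(p)+r(p)\bigr)\,(dp\mid Z_0).
\end{equation*}
Estimate (\ref{n3}) — again along a segment inside the ball of radius $R$ — gives $\|r(p)\|_{L^2}\le\tilde C(1+2R)\|\delta(p)\|_{H^1_0}^2$, while the Lipschitz bound of the previous paragraph gives $\|\delta(p)\|_{H^1_0}\le e^{Kt}\|\bar U_2-\bar U_1\|_{Z_0}$ for $p\in[0,t]$. Taking $Z_0$-norms, bounding the first integrand by $K\|(w(p),z(p))\|_{Z_0}$, and applying Gronwall once more yields $\|(w(t),z(t))\|_{Z_0}\le C(t)\|\bar U_2-\bar U_1\|_{Z_0}^2$, where $C(t)$ depends only on $t$, $K$ and $R$. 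Dividing by $\|\bar U_2-\bar U_1\|_{Z_0}$ and letting $\epsilon\to0$ gives the asserted limit, uniform in $\bar U_1,\bar U_2\in\mathcal I$ precisely because $C(t)$ does not depend on them.

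The computation is entirely routine; the only points that need attention are that every constant must be written in terms of $|\mathcal I|_{Z_0}$ alone — this is what produces the uniformity — and that in the remainder estimate one must observe that the segment $\{(1-\theta)u_1(p)+\theta u_2(p)\mid\theta\in[0,1]\}$ lies in the $H^1_0$-ball of radius $|\mathcal I|_{Z_0}$, so that (\ref{n3}) applies with a constant independent of the data. I do not expect a genuine obstacle here.
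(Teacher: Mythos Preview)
Your argument is correct and is precisely the ``standard computation using Gronwall's inequality and Proposition \ref{newprop1}'' that the paper invokes without writing out; the paper gives no further details, and your three successive Gronwall steps (bound on $\mathcal U$, Lipschitz bound on $\Pi(t)$, quadratic remainder estimate) are the expected content of that computation. Your care with the convex segment staying in the $H^1_0$-ball of radius $|\mathcal I|_{Z_0}$ is exactly the point that makes the constants uniform.
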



\section{Dimension of invariant sets}

Let $\mathcal X$ be a complete metric space and let $\mathcal
K\subset\mathcal X$ be a compact set. For $d\in \R^+$ and
$\epsilon>0$ one defines
\begin{equation}
\mu_{H}(\mathcal K,d,\epsilon):=\inf \left\{\sum_{i\in I}r_i^d\mid
\mathcal K\subset \bigcup_{i\in I}B(x_i,r_i),\, r_i\leq\epsilon
\right\},
\end{equation}
where the infimum is taken over all the finite coverings of
$\mathcal K$ with balls of radius $r_i\leq\epsilon$. Observe that
$\mu_{H}(\mathcal K,d,\epsilon)$ is a non increasing function of
$\epsilon$ and $d$. The $d$-dimensional Hausdorff measure of
$\mathcal K$ is by definition
\begin{equation}
\mu_H(\mathcal K,d):=\lim_{\epsilon\to 0}\mu_{H}(\mathcal
K,d,\epsilon)= \sup_{\epsilon>0}\mu_{H}(\mathcal K,d,\epsilon).
\end{equation}
One has:
\begin{enumerate}
\item $\mu_H(\mathcal K,d)\in[0,+\infty]$;
\item if $\mu_H(\mathcal K,\bar d)<\infty$, then $\mu_H(\mathcal
K,d)=0$ for all $d>\bar d$; \item if $\mu_H(\mathcal K,\bar d)>0$,
then $\mu_H(\mathcal K,d)=+\infty$ for all $d<\bar d$.
\end{enumerate}
The Hausdorff dimension of $\mathcal K$ is the smallest $d$ for
which $\mu_H(\mathcal K,d)$ is finite, i.e.
\begin{equation}
{\rm dim}_{H}(\mathcal K):=\inf\{d>0\mid \mu_H(\mathcal K,d)=0\}.
\end{equation}

Now let $n_{\mathcal K}(\epsilon)$, $\epsilon>0$, denote the
minimum number of balls of $\mathcal X$ of radius $\epsilon$ which
is necessary to cover $\mathcal K$. The fractal dimension of
$\mathcal K$ is the number

\begin{equation}
{\rm dim}_{F}(\mathcal K):=\limsup_{\epsilon\to 0}\frac{\log
n_{\mathcal K}(\epsilon)}{\log 1/\epsilon}.
\end{equation}

There is a well developed technique to estimate the Hausdorff
dimension of an invariant set of a map or a semigroup. We refer
the reader e.g. to \cite{Tem} and \cite{Lady}. The geometric idea
consists in tracking the evolution of a $d$-dimensional volume
under the action of the linearization of the semigroup along
solutions lying in the invariant set. One looks then for the
smallest $d$ for which any $d$-dimensional volume contracts
asymptotically as $t\to\infty$.

We fix $\delta\in\R$ and we introduce a \emph{change of coordinates} in the space $Z_\kappa$, $\kappa\in\R$, by
\begin{equation*}
R_\delta\colon Z_{\kappa}\to Z_{\kappa}, \quad (u,v)\mapsto (u,v+\delta u).
\end{equation*}
The constant $\delta$ is to be fixed later. Clearly the transformation $R_\delta$ is linear, bounded and invertible, with inverse $R_\delta^{-1}=R_{-\delta}$.
We define the semiflow $$\Pi_\delta(t):=R_\delta\circ\Pi(t)\circ R_{-\delta}$$ and we set $\mathcal I_\delta:=R_\delta \mathcal I$. Then $\mathcal I_\delta$ is a compact invariant set of $\Pi_\delta$,
it is bounded in $Z_1$, and $\dim \mathcal I_\delta=\dim\mathcal I$. For $\tilde U_0\in\mathcal I_\delta$ and $t\geq 0$ we set
$$
\mathcal U_\delta(\tilde U_0;t):=R_\delta \circ \mathcal U(R_{-\delta}\tilde U_0;t)\circ R_{-\delta}.
$$
Then the conclusions of Proposition \ref{newprop2} hold with $\Pi(t)$, $\mathcal I$ and $\mathcal U(\bar U;t)$ replaced by $\Pi_\delta(t)$, $\mathcal I_\delta$ and $\mathcal U_\delta(\tilde U;t)$.

Let $\tilde U_0=(\tilde u_0,\tilde v_0)\in\mathcal I_\delta$ and
let $\tilde U (t)=(\tilde u(t),\tilde v(t))=\Pi_\delta (t)\tilde
U_0$. Let $\Phi_{0,i}$, $i=1$, \dots, $d$, be linearly independent
elements of $Z_0$, $\Phi_{0,i}=(\phi_{0,i},\psi_{0,i})$. Set
$\Phi_i(t):=\mathcal U_\delta(\tilde U_0;t)\Phi_{0,i}$. We denote
by $G(t)$
 the square of  the $d$-dimensional volume delimited by $\Phi_1(t)$, \dots, $\Phi_\delta(t)$, that is
 \begin{equation}
 G(t):=\|\Phi_1(t)\wedge\dots\wedge \Phi_d(t)\|_{\wedge^d Z_0}^2=\det(\langle \Phi_i(t),\Phi_j(t)\rangle_{Z_0})_{ij}.
 \end{equation}
We need to find a differential equation satisfied by $G(t)$.
\begin{Lemma}\label{derivative}
Let $i$ and $j\in\{1,\dots,d\}$ be fixed. Then the function $t\mapsto \langle \Phi_i(t),\Phi_j(t)\rangle_{Z_0}$ is continuously differentiable, and
\begin{multline}
\frac d{dt} \langle \Phi_i,\Phi_j\rangle_{Z_0}=-2\delta\langle
\phi_i,\phi_j\rangle_{H^1_0}-2(\alpha-\delta)\langle\psi_i,\psi_j\rangle_{L^2}\\
+\delta(\alpha-\delta)(\langle\phi_i,\psi_j\rangle_{L^2}+\langle\psi_i,\phi_j\rangle_{L^2})+
(\langle\widehat{\partial_u f}(\tilde
u(t))\phi_i,\psi_j\rangle_{L^2}+\langle\psi_i,\widehat{\partial_u
f}(\tilde u(t))\phi_j\rangle_{L^2}).
\end{multline}
\end{Lemma}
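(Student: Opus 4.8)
The plan is to differentiate the inner product $\langle\Phi_i(t),\Phi_j(t)\rangle_{Z_0}$ directly, using the equation satisfied by each $\Phi_i(t)=\mathcal U_\delta(\tilde U_0;t)\Phi_{0,i}$. First I would write $\Phi_i(t)=(\phi_i(t),\psi_i(t))$ and record the evolution equation: since $\mathcal U_\delta(\tilde U_0;t)=R_\delta\circ\mathcal U(R_{-\delta}\tilde U_0;t)\circ R_{-\delta}$ and $\mathcal U(\bar U_0;t)$ is the $Z_0$-restriction of the evolution family generated in $Z_{-1}$ by $\mathbf B_{-1}+\mathbf C_{-1}(t)$ (with $\mathbf C_{-1}(t)(h,k)=(0,\widehat{\partial_u f}(\bar u(t))h)$), one obtains that in the $\delta$-shifted coordinates $\Phi_i(t)$ is continuously differentiable into $Z_{-1}$ and solves $(\partial_t\mid Z_{-1})\Phi_i(t)=\tilde{\mathbf B}_{-1}(t)\Phi_i(t)$, where $\tilde{\mathbf B}_{-1}(t)=R_\delta(\mathbf B_{-1}+\mathbf C_{-1}(t))R_{-\delta}$. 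A short computation with $R_\delta(u,v)=(u,v+\delta u)$ gives the componentwise system
\begin{equation*}
\begin{cases}
(\partial_t\mid H_0)\phi_i(t)=\psi_i(t)-\delta\phi_i(t),\\
(\partial_t\mid H_{-1})\psi_i(t)=-\mathbf A_{-1}\phi_i(t)-(\alpha-\delta)\psi_i(t)+\delta(\alpha-\delta)\phi_i(t)+\widehat{\partial_u f}(\tilde u(t))\phi_i(t).
\end{cases}
\end{equation*}

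Next I would justify that $t\mapsto\langle\Phi_i(t),\Phi_j(t)\rangle_{Z_0}=\langle\phi_i(t),\phi_j(t)\rangle_{H^1_0}+\langle\psi_i(t),\psi_j(t)\rangle_{L^2}$ is $C^1$ and apply the product rule. The term $\langle\psi_i,\psi_j\rangle_{L^2}$ is handled by the duality pairing $H^{-1}$–$H^1_0$: since $\psi_i(t)\in L^2\subset H^{-1}$ is $C^1$ into $H_{-1}$ and remains in $H^1_0$ (by the regularity $\mathcal I\subset Z_1$, Theorem~\ref{newth1}, and the analogous bound for the linearized flow, which follows as in Proposition~\ref{newprop2}), we have $\frac{d}{dt}\langle\psi_i,\psi_j\rangle_{L^2}=\langle(\partial_t\mid H_{-1})\psi_i,\psi_j\rangle_{H^{-1},H^1_0}+\langle\psi_i,(\partial_t\mid H_{-1})\psi_j\rangle_{H^1_0,H^{-1}}$. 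For the $H^1_0$-term I would use $\langle\phi_i,\phi_j\rangle_{H^1_0}=\langle\mathbf A_0\phi_i,\phi_j\rangle_{L^2}=\langle\mathbf A_{-1}\phi_i,\phi_j\rangle_{H^{-1},H^1_0}$, valid because $\phi_i(t)\in H_1$ with $\phi_j(t)\in H_1$, so that $\frac{d}{dt}\langle\phi_i,\phi_j\rangle_{H^1_0}=\langle\mathbf A_{-1}(\partial_t\mid H_0)\phi_i,\phi_j\rangle+\langle\phi_i,\mathbf A_{-1}(\partial_t\mid H_0)\phi_j\rangle$, where each pairing is again $H^{-1}$–$H^1_0$ (or symmetrically the other way). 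Substituting $(\partial_t\mid H_0)\phi_i=\psi_i-\delta\phi_i$ here yields $-2\delta\langle\phi_i,\phi_j\rangle_{H^1_0}+\langle\psi_i,\phi_j\rangle_{H^1_0}+\langle\phi_i,\psi_j\rangle_{H^1_0}$, and the last two terms will cancel against pieces coming from the $\psi$-derivative.

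Then it is a matter of collecting terms. Plugging the $\psi_i$-equation into $\frac{d}{dt}\langle\psi_i,\psi_j\rangle_{L^2}$ produces: from $-\mathbf A_{-1}\phi_i$ the term $-\langle\mathbf A_{-1}\phi_i,\psi_j\rangle=-\langle\phi_i,\psi_j\rangle_{H^1_0}$ (and its symmetric partner), which cancels the two leftover $H^1_0$-pairings from the previous paragraph; from $-(\alpha-\delta)\psi_i$ the term $-2(\alpha-\delta)\langle\psi_i,\psi_j\rangle_{L^2}$; from $\delta(\alpha-\delta)\phi_i$ the term $\delta(\alpha-\delta)(\langle\phi_i,\psi_j\rangle_{L^2}+\langle\psi_i,\phi_j\rangle_{L^2})$; and from $\widehat{\partial_u f}(\tilde u(t))\phi_i$ the term $\langle\widehat{\partial_u f}(\tilde u(t))\phi_i,\psi_j\rangle_{L^2}+\langle\psi_i,\widehat{\partial_u f}(\tilde u(t))\phi_j\rangle_{L^2}$. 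Adding everything gives exactly the claimed identity.

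The main obstacle is the rigorous justification of the differentiation, not the algebra: a priori $\Phi_i(t)$ is only $C^1$ into $Z_{-1}=H_0\times H_{-1}$, so the symmetric pairings $\langle\mathbf A_{-1}\phi_i,\phi_j\rangle$ and $\langle\psi_i,\mathbf A_{-1}\phi_j\rangle$, and likewise $\langle\widehat{\partial_u f}(\tilde u)\phi_i,\psi_j\rangle$, must be shown to be well-defined as $H^{-1}$–$H^1_0$ dualities and to arise legitimately from the product rule. This rests on the smoothing/regularity that $\phi_i(t),\psi_i(t)\in H^1_0(\Omega)$ for $t$ in the relevant range — which follows from the fact that $\tilde u(t)$ lies in the $Z_1$-bounded set $\mathcal I_\delta$ (Theorem~\ref{newth1}) together with the parabolic-type smoothing of the linearized wave semiflow, exactly as used to prove Proposition~\ref{newprop2}. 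Once one knows $\Phi_i\in C(\R,Z_1)\cap C^1(\R,Z_0)$ (with a further $C^1(\R,Z_{-1})$ statement for the $\psi$-component), the pairings are continuous in $t$, the product rule applies, and the computation closes. I would also note that $\widehat{\partial_u f}(\tilde u(t))\colon L^2\to H^{-1}$ is bounded with norm controlled by $1+\|\tilde u(t)\|_{H^1_0}^2$ (Proposition~\ref{newprop1}, estimate~\eqref{n4}), which makes the last pairing continuous in $t$ and hence guarantees that $\frac{d}{dt}\langle\Phi_i,\Phi_j\rangle_{Z_0}$ is continuous, i.e. $t\mapsto\langle\Phi_i(t),\Phi_j(t)\rangle_{Z_0}$ is genuinely $C^1$.
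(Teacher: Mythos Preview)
Your algebra is correct and your derivation of the $\delta$-shifted system for $(\phi_i,\psi_i)$ is right; once the product rule is justified, the cancellations go exactly as you describe and yield the stated identity. The gap is in the regularity step. You assert that $\psi_i(t)\in H^1_0(\Omega)$ and, more generally, that $\Phi_i\in C(\R,Z_1)\cap C^1(\R,Z_0)$, invoking ``parabolic-type smoothing of the linearized wave semiflow''. But the damped wave equation is hyperbolic and has \emph{no} smoothing: if $\Phi_{0,i}\in Z_0\setminus Z_1$ (and the $\Phi_{0,i}$ are arbitrary elements of $Z_0$), then $\Phi_i(t)$ stays in $Z_0$ and need not ever enter $Z_1$. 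Theorem~\ref{newth1} concerns the \emph{nonlinear} invariant set $\mathcal I$, and Proposition~\ref{newprop2} only gives a $\mathcal L(Z_0,Z_0)$ bound; neither produces $Z_1$-regularity for the linearized flow on generic $Z_0$-data. Consequently the pairings $\langle(\partial_t\mid H_{-1})\psi_i,\psi_j\rangle_{H^{-1},H^1_0}$ and $\langle\mathbf A_{-1}\phi_i,\psi_j\rangle$ that you write down are not, in general, well defined, and the product-rule computation as written is formal.

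The fix is not extra regularity but a density argument: the right-hand side of the claimed formula is a continuous bilinear form in $(\Phi_i,\Phi_j)\in Z_0\times Z_0$ (every term involves only $H^1_0$- or $L^2$-pairings of $\phi$'s and $\psi$'s, and $\widehat{\partial_u f}(\tilde u)\phi\in L^2$ by Proposition~\ref{newprop1}). So one can take $\Phi_{0,i}\in Z_1=D(\mathbf B_0)$, for which $\Phi_i\in C^1(\R,Z_0)$ and your computation is rigorous, obtain the identity, and then pass to the limit using continuity of both sides with respect to $\Phi_{0,i}\in Z_0$. This is exactly what the paper does, packaged abstractly: it applies a general differentiation lemma for functionals along mild solutions (Theorem~2.6 in \cite{PR2}) to the functional $V(U_1,U_2)=\langle R_\delta U_1,R_\delta U_2\rangle_{Z_0}$ on $Z_0\oplus Z_0$, which performs precisely this density/extension step behind the scenes.
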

\begin{proof}
 First set
$\bar U_0:=R_{-\delta}\tilde U_0$, $\bar U(t):=R_{-\delta}\tilde U(t)$, $\Theta_{0,l}=(\theta_{0,l},\chi_{0,l}):=R_{-\delta}\Phi_{0,l}$, $l=i,j$,
and $\Theta_{l}(t)=(\theta_{l}(t),\chi_{l}(t)):=R_{-\delta}\Phi_{l}(t)$, $l=i,j$. Notice that $\Theta_l(t)=\mathcal U(\bar U_0;t)\Theta_{0,l}$, $l=i,j$.
It follows that $\langle \Phi_i(t),\Phi_j(t)\rangle_{Z_0}=\langle R_\delta \Theta_i(t),R_\delta\Theta_j(t)\rangle_{Z_0}$.
Now we shall apply Theorem 2.6 in \cite{PR2}.
Set:
\begin{itemize}
\item $Z:=Z_0\oplus Z_0$;
\item $T(t):=\mathbf T_0(t)\oplus\mathbf T_0(t)$;
\item $B:=\mathbf B_0\oplus\mathbf B_0$;
\item $g(s)=(0, \widehat{\partial_u f}(\tilde u(t))\theta_i(t))\oplus(0,\widehat{\partial_u f}(\tilde u(t))\theta_j(t)) $;
\item $z(t)=\Theta_i(t)\oplus\Theta_j(t)$;
\item $V(U_1,U_2):=\langle R_\delta U_1,R_\delta U_2\rangle_{Z_0}$
\end{itemize}
A standard computation shows that $V$ is Fr\'echet differentiable in $Z$; moreover, for $U_i\oplus U_j \in D(B)$ and $H_i\oplus H_j\in Z$,
\begin{multline*}
DV(U_i\oplus U_j)[B (U_i\oplus U_j)+H_i\oplus H_j] =\langle
v_i+h_i,u_j\rangle_{H^1_0}+\delta\langle v_i+h_i,\delta u_j+v_j
\rangle_{L^2}\\+\langle-\alpha v_i+k_i,\delta
u_j+v_j\rangle_{L^2}-\langle \mathbf A_0 u_i,\delta u_j+v_j
\rangle_{L^2} +\langle
u_i,v_j+h_j\rangle_{H^1_0}\\+\delta\langle\delta u_i+v_i,v_j+h_j
\rangle_{L^2}+\langle\delta u_i+v_i,-\alpha v_j+k_j
\rangle_{L^2}-\langle\delta u_i+v_i,\mathbf A_0 u_j \rangle_{L^2}
\\=-2\delta\langle u_i,u_j\rangle_{H^1_0} +(\langle
h_i,u_j\rangle_{H^1_0}+\langle u_i,h_j\rangle_{H^1_0})+(\langle
k_i,\delta u_j+v_j\rangle_{L^2}+\langle\delta
u_i+v_i,k_j\rangle_{L^2})\\ +(\langle \delta(v_i+h_i)-\alpha
v_i,\delta u_j+v_j \rangle_{L^2}+\langle \delta
u_i+v_i,\delta(v_j+h_j)-\alpha v_j\rangle_{L^2})
\end{multline*}
where $U_l=(u_l,v_l)$ and $H_l=(h_l,k_l)$, $l=i,j$. It follows from Theorem 2.6 in \cite{PR2} that
\begin{multline*}
\frac d{dt} \langle \Phi_i,\Phi_j\rangle_{Z_0}=\frac d{dt} V(\Theta_i,\Theta_j)=-2\delta\langle \theta_i,\theta_j\rangle_{H^1_0}+(\langle(\delta-\alpha)\chi_i,\delta\theta_j+\chi_j\rangle_{L^2}\\
+\langle \delta\theta_i+\chi_i,(\delta-\alpha)\chi_j\rangle_{L^2}
+(\langle \widehat{\partial_u f}(\tilde u(t))\theta_i,\delta\theta_j+\chi_j\rangle_{L^2}+\langle\delta\theta_i+\chi_i,\widehat{\partial_u f}(\tilde u(t))\theta_j\rangle_{L^2}\\
=-2\delta\langle \phi_i,\phi_j\rangle_{H^1_0}-2(\alpha-\delta)\langle\psi_i,\psi_j\rangle_{L^2}
+\delta(\alpha-\delta)(\langle\phi_i,\psi_j\rangle_{L^2}+\langle\psi_i,\phi_j\rangle_{L^2})\\
+(\langle\widehat{\partial_u f}(\tilde u(t))\phi_i,\psi_j\rangle_{L^2}+\langle\psi_i,\widehat{\partial_u f}(\tilde u(t))\phi_j\rangle_{L^2})
\end{multline*}
and the proof is completed.
\end{proof}

Let $\tilde U=(\tilde u,\tilde v)\in\mathcal I_\delta$ and let
$\Sigma_d$ be a $d$-dimensional subspace of $Z_0$. On $\Sigma_d$
we define a self-adjoint operator $\mathbf B_{\tilde
U,\Sigma_d,\delta}$ by
\begin{multline}
\langle\mathbf B_{\tilde U,
\Sigma_d,\delta}(u,v),(\xi,\eta)\rangle_{Z_0}:=-2\delta\langle
u,\xi\rangle_{H^1_0}-2(\alpha-\delta)\langle v,\eta\rangle_{L^2}\\
+\delta(\alpha-\delta)(\langle u,\eta \rangle_{L^2}+\langle
v,\xi\rangle_{L^2})+ (\langle\widehat{\partial_u f}(\tilde u)
u,\eta\rangle_{L^2}+\langle v,\widehat{\partial_u f}(\tilde
u)\xi\rangle_{L^2}),
\end{multline}
for $(u,v)$ and $(\xi,\eta)\in\Sigma_d$.

Now let $\tilde U_0$, $\tilde U(t)$, $\Phi_{0,i}$ and
$\Phi_{i}(t)$, $i=1$, \dots, $d$, and $G(t)$ be as above. We set
 $\Sigma_d(t):={\mathrm{span}}(\Phi_1(t),\dots,\Phi_d(t))$ and we define a $(d\times d)$- matrix $(b_{il}(t))_{il}$
 such that $$\mathbf B_{\tilde U(t),\Sigma_d(t),\delta}\Phi_i(t)=\sum_{l=1}^db_{il}(t)\Phi_l(t).$$
It follows from Lemma \ref{derivative} that
\begin{equation}
\frac d{dt} \langle
\Phi_i(t),\Phi_j(t)\rangle_{Z_0}=\langle\mathbf B_{\tilde
U(t),\Sigma_d(t),\delta}\Phi_i(t),\Phi_j(t)\rangle_{Z_0}=\sum_{l=1}^db_{il}(t)\langle\Phi_l(t),\Phi_j(t)\rangle_{Z_0}.
\end{equation}
A straightforward computation now shows that
\begin{equation}
\frac d{dt} G(t)=\left(\sum_{i=1}^d
b_{ii}(t)\right)G(t)=\mathrm{Tr} (\mathbf B_{\tilde
U(t),\Sigma_d(t),\delta}) G(t).
\end{equation}
Therefore we get:
\begin{equation}
\|\Phi_1(t)\wedge\dots\wedge \Phi_d(t)\|_{\wedge^d
Z_0}^2=\|\Phi_{0,1}\wedge\dots\wedge \Phi_{0,d}\|_{\wedge^d Z_0}^2
\exp\int_0^t \mathrm{Tr} (\mathbf B_{\tilde
U(s),\Sigma_d(s),\delta}) \,ds.
\end{equation}

For $j\in\N$, define the quantities
\begin{equation}\label{exponents}
p_j:=\sup\left\{\mathrm{Tr} (\mathbf B_{\tilde
U,\Sigma_j,\delta})\mid \Tilde U\in\mathcal I_\delta,\,
\Sigma_j\subset Z_0,\, \dim \Sigma_j=j \right\}.
\end{equation}
It follows from the results in \cite[Ch. V, pp 287--291]{Tem} that
if for some $d$ one has $p_d<0$ then the Hausdorff dimension of
$\mathcal I_\delta$ in $Z_0$ is finite and less than or equal to
$d$, and the fractal dimension of $\mathcal I_\delta$ in $Z_0$ is
finite and less than or equal to $d\max_{1\leq j\leq
d-1}(1+(p_j)_+/|p_d|)$. Therefore we must choose $\delta>0$ in
such a way that we can find $d$ such that $p_d<0$.

First we observe that, given an orthonormal basis $\check\Phi_1$,
\dots, $\check\Phi_d$ of $\Sigma_d$, then
\begin{multline}
\mathrm{Tr} (\mathbf B_{\tilde U,\Sigma_d,\delta})=\sum_{i=1}^d
\langle \mathbf B_{\tilde
U,\Sigma_d,\delta}\check\Phi_i,\check\Phi_i \rangle_{Z_0}\\
=\sum_{i=1}^d\left(-2\delta\|\check\phi_i\|_{H^1_0}^2-2(\alpha-\delta)\|\check\psi_i\|_{L^2}^2
+2\delta(\alpha-\delta)\langle\check\phi_i,\check\psi_i\rangle_{L^2}
+2\langle\widehat{\partial_u f}(\tilde
u)\check\phi_i,\check\psi_i\rangle_{L^2}\right),
\end{multline}
where $\check\Phi_i=(\check\phi_i,\check\psi_i)$, $i=1$, \dots,
$d$. Now, following the arguments of \cite{Shen}, we choose
$\delta:=\lambda_1\alpha/(\alpha^2+4\lambda_1)$. With this choice
of $\delta$, using Cauchy-Schwartz and Young's inequalities and
setting
\begin{equation}\label{def-nu}\nu_\alpha:=\frac{\lambda_1\alpha}{\sqrt{\alpha^2+4\lambda_1}(\alpha+\sqrt{\alpha^2+4\lambda_1})},\end{equation}
we get
\begin{equation}
\mathrm{Tr} (\mathbf B_{\tilde U,\Sigma_d,\delta}) \leq
-2\nu_\alpha d +
\sum_{i=1}^d\left(-\alpha\|\check\psi_i\|_{L^2}^2+2\langle\widehat{\partial_u
f}(\tilde u)\check\phi_i,\check\psi_i\rangle_{L^2}\right);
\end{equation}
using again Cauchy-Schwartz and Young's inequalities, we finally
obtain
\begin{equation}\label{traceinequality}
\mathrm{Tr} (\mathbf B_{\tilde U,\Sigma_d,\delta}) \leq
-2\nu_\alpha d +\frac1\alpha \sum_{i=1}^d\|\widehat{\partial_u
f}(\tilde u)\check\phi_i\|_{L^2}^2.
\end{equation}

\begin{Rem} Our choice of $\delta$, according to \cite{Shen}, is
better than the classical
$0<\delta\leq\min\{\alpha/4,\lambda_1/2\alpha\}$ (see e.g.
\cite{Tem}): indeed, when considering attractors of dissipative
wave equations, it yields dimensional bounds which are independent
of $\alpha$.
\end{Rem}

In order to prove finite dimensionality of $\mathcal I_\delta$, we
have now to find $d$ sufficiently large, so that the right hand
side of (\ref{traceinequality}) is negative, uniformly with
respect to $\tilde U$ and $\Sigma_\delta$. We introduce the
following fundamental Hypothesis:

\begin{Hyp}\label{hyp3}\
\begin{enumerate}
\item $\partial_u f(x,0)\geq 0$ for a.e. $x\in\Omega$;
\item there exists $r>3$ such that $\partial_u f(\cdot,0)\in
L^r(\Omega)$.
\end{enumerate}
Notice that property (1) is not really a restriction, since the
negative part of $\partial_u f(\cdot,0)$ can be absorbed by
$\beta(\cdot)$.\end{Hyp}

We observe that, by Hypotheses \ref{hyp2} and \ref{hyp3}, we have:
\begin{equation}
|\partial_u f(x,u)|\leq \partial_u f(x,0)+C(1+|u|)|u|,\quad
(x,u)\in\Omega\times\R.
\end{equation}

Take $\rho\in\mathcal S$ (the Schwartz class) with $\rho(x)>0$ for all $x\in\R^3$ and, for $\epsilon\geq 0$, define
\begin{equation}\label{pot}
W_{\tilde U}(x):=\partial_u f(x,0)+C(1+|\tilde
u|_{L^\infty})|\tilde u(x)|,\quad x\in\Omega,
\end{equation}
and
\begin{equation}
W_{\tilde U,\epsilon}(x):=W_{\tilde U}(x)+\epsilon\rho(x),\quad x\in\Omega.
\end{equation}
The reason for introducing the correction $\epsilon\rho(x)$ will be made clear later.
Notice that  $W_{\tilde U,\epsilon}(\cdot)\in L^r(\Omega)$ for $\epsilon\geq 0$ and $W_{\tilde U,\epsilon}>0$ for $x\in\Omega$ and $\epsilon>0$. Moreover,
\begin{equation}
\|\widehat{\partial_u f}(\tilde u)u\|_{L^2}^2\leq
\|W_{\tilde U} u\|_{L^2}^2\leq\|W_{\tilde U,\epsilon} u\|_{L^2}^2,\quad u\in H^1_0(\Omega).
\end{equation}
It follows from Lemma 4.5 in \cite{P2} that the assignment $u\mapsto W_{\tilde U,\epsilon}u$ defines a compact linear operator from $H^1_0(\Omega)$ to $L^2(\Omega)$.
Let us define the following operator $S_{\tilde U,\epsilon}\colon Z_0\to Z_0$:
\begin{equation}
S_{\tilde U,\epsilon}(u,v):=(0,W_{\tilde U,\epsilon}u),\quad U=(u,v)\in Z_0.
\end{equation}
Then $S_{\tilde U,\epsilon}$ is compact, and the same is true for its adjoint $S_{\tilde U,\epsilon}^*$. We have
\begin{equation}
\|W_{\tilde U,\epsilon} u\|_{L^2}^2=\langle S_{\tilde
U,\epsilon}U,S_{\tilde U,\epsilon}U\rangle_{Z_0}=\langle S_{\tilde
U,\epsilon}^*S_{\tilde U,\epsilon}U,U\rangle_{Z_0},\quad
U=(u,v)\in Z_0.
\end{equation}

The operator $S_{\tilde U,\epsilon}^*S_{\tilde U,\epsilon}$ is
compact, self-adjoint and non-negative. It follows that its
spectrum is
\begin{equation}
\sigma(S_{\tilde U,\epsilon}^*S_{\tilde U,\epsilon})=\{0\}\cup \{\mu_{\tilde U,\epsilon,j}\mid j=1,2,3,\dots\},
\end{equation}
where $(\mu_{\tilde U,\epsilon,j})_{j\in\N}$ is a non-increasing sequence of real numbers tending to $0$. The numbers $\mu_{\tilde U,\epsilon,j}$, $j\in\N$, are the  eigenvalues of
$S_{\tilde U,\epsilon}^*S_{\tilde U,\epsilon}$, repeated according to their multiplicity. In principle, the sequence $(\mu_{\tilde U,\epsilon,j})_{j\in\N}$ can be ultimately null, but we shall see that this is not the case. Finally, the sequence  $(\mu_{\tilde U,\epsilon,j})_{j\in\N}$ is characterized by the  $\min - \max $ formulae:
\begin{equation}
\mu_{\tilde U,\epsilon,j+1}=\min_{\dim E\leq j}\max_{\substack{U\in\mathcal E^{\perp}\\ \|U\|_{Z_0}=1}}\langle S_{\tilde U,\epsilon}^*S_{\tilde U,\epsilon}U,U\rangle_{Z_0}.
\end{equation}
Let $P_\Sigma$ be the $Z_0$-orthogonal projection onto $\Sigma$. Arguing as in the proof of Theorem XIII.3 in \cite{RS}, we obtain
\begin{equation}\label{ineq-1}
 \sum_{i=1}^d\|\widehat{\partial_u f}(\tilde u)\check\phi_i\|_{L^2}^2 \leq \sum_{i=1}^d \langle S_{\tilde U,\epsilon}^*S_{\tilde U,\epsilon}
 \check\Phi_i,\check\Phi_i\rangle_{Z_0}
 =\mathrm{Tr}(P_\Sigma \circ (S_{\tilde U,\epsilon}^*S_{\tilde U,\epsilon})|_\Sigma)
 \leq \sum_{i=1}^d \mu_{\tilde U,\epsilon,i}.
 \end{equation}
 It follows from (\ref{traceinequality}) and (\ref{ineq-1}) that
\begin{equation}\label{ineq-2}
\mathrm{Tr} (\mathbf B_{\tilde U,\Sigma_d,\delta}) \leq -\frac d \alpha\left(2\nu_\alpha \alpha
-\frac1d \sum_{i=1}^d \mu_{\tilde U,\epsilon,i}\right).
\end{equation}
Now, since $\mu_{\tilde U,\epsilon,i}\to 0$ as $i\to\infty$, the
also the \emph{Cesaro means} $(1/d)\sum_{i=1}^d \mu_{\tilde
U,\epsilon,i}\to 0$ as $d\to\infty$. Therefore there exists
$d=d(\tilde U)$ such that the right-hand side of (\ref{ineq-2}) is
negative. The problem is that $d(\tilde U)$ depends on $\tilde U$,
so we must perform a more careful inspection
 of the asymptotic behavior of the sequence $(\mu_{\tilde U,\epsilon,j})_{j\in\N}$.

Let $(\mu,\Phi)$ be an eigenvalue-eigenvector pair of $S_{\tilde U,\epsilon}^*S_{\tilde U,\epsilon}$, with $\mu\not=0$. This is equivalent to say that
\begin{equation}\label{equal-1}
\langle S_{\tilde U,\epsilon}^*S_{\tilde U,\epsilon}\Phi, U\rangle_{Z_0}=\mu\langle\Phi, U\rangle_{Z_0}\quad\text{for all $U\in Z_0$}.
\end{equation}\label{equal-2}
More explicitly, (\ref{equal-1}) reads
\begin{equation}
\int_\Omega W_{\tilde U,\epsilon}(x)^2\phi u\,dx=\mu\left(\int_\Omega\nabla\phi\cdot\nabla u\,dx+\int_\Omega\beta(x)\phi u\,dx+\int_\Omega \psi v\,dx \right)
\end{equation}
for all $U\in Z_0$, where $\Phi=(\phi,\psi)$ and $U=(u,v)$.
Choosing first $u=0$ and letting $v\in L^2(\Omega)$ be arbitrary, we get that $\psi=0$. It follows that $\phi$ must satisfy
\begin{equation}
\int_\Omega W_{\tilde U,\epsilon}(x)^2\phi u\,dx=\mu\left(\int_\Omega\nabla\phi\cdot\nabla u\,dx+\int_\Omega\beta(x)\phi u\,dx \right)\quad\text{for all $u\in H^1_0$.}
\end{equation}
Thus we have obtained that $(\mu,\Phi)$ is an eigenvalue-eigenvector pair of $S_{\tilde U,\epsilon}^*S_{\tilde U,\epsilon}$ with $\mu\not=0$ if and only if $\psi=0$ and $(\mu,\phi)=(\lambda^{-1},\phi)$,
where $(\lambda,\phi)$ is an eigenvalue-eigenvector pair of the \emph{weighted eigenvalue problem}
\begin{equation}\label{weighted-eigen}
\int_\Omega\nabla\phi\cdot\nabla u\,dx+\int_\Omega\beta(x)\phi u\,dx =\lambda \int_\Omega W_{\tilde U,\epsilon}(x)^2 \phi u\,dx\quad\text{for all $u\in H^1_0(\Omega)$}.
\end{equation}

In order to study (\ref{weighted-eigen}) we proceed  as in
\cite{KaSta}: we denote by $L^2_{W_{\tilde, U\epsilon}}(\Omega)$
the closure of $H^1_0(\Omega)$ with respect to the scalar product
\begin{equation}
\langle u_1,u_2\rangle_{L^2_{W_{\tilde U,\epsilon}}}:= \int_\Omega
W_{\tilde U,\epsilon}(x)^2 u_1 u_2\,dx.
\end{equation}
It turns out that $L^2_{W_{\tilde U,\epsilon}}(\Omega)$ is a
separable Hilbert space, and $H^1_0(\Omega)$ is compactly embedded
into $L^2_{W_{\tilde U,\epsilon}}(\Omega)$. This is a consequence
of the fact that $W_{\tilde U, \epsilon}^2\in L^{r/2}(\Omega)$ with
$r>3$ and  $W_{\tilde U,\epsilon}(x)>0$ a.e. in $\Omega$. The
latter observation makes clear the reason for which we introduced
the correction $\epsilon\rho(x)$. It follows from the general
theory of self-adjoint operators with compact resolvent (see e.g. \cite{EE}) that the
eigenvalues of (\ref{weighted-eigen}), counted according to their
multiplicity, form a sequence $(\lambda_{\tilde U,\epsilon,
j})_{j\in\N}$, with $\lambda_{\tilde U,\epsilon, j}\to +\infty$ as
$j\to\infty$. Now let $\tilde\lambda>0$; we want to find an
estimate for the number $\mathcal N(W_{\tilde
U,\epsilon},\tilde\lambda)$ of eigenvalues of
(\ref{weighted-eigen}) which are strictly smaller than $\tilde\lambda$. To
this end, we exploit a trick due to Li and Yau (see \cite[Cor.
2]{LY}). Namely, we notice that, for $\phi\in H^1_0(\Omega)$,
\begin{multline}\label{rasp}
\frac{\int_\Omega|\nabla\phi|^2\,dx+\int_\Omega\beta(x)\phi^2\,dx
- \int_\Omega \tilde\lambda W_{\tilde U,\epsilon}(x)^2 \phi^2
\,dx}{\int_\Omega \phi^2\,dx}\\ =\frac{\int_\Omega W_{\tilde
U,\epsilon}(x)^2 \phi^2 \,dx}{\int_\Omega
\phi^2\,dx}\left(\frac{\int_\Omega|\nabla\phi|^2\,dx+\int_\Omega\beta(x)\phi^2\,dx}{\int_\Omega
W_{\tilde U,\epsilon}(x)^2 \phi^2 \,dx}-\tilde\lambda\right).
\end{multline}
It follows that, given a finite dimensional subspace $E$ of
$H^1_0(\Omega)$, the expression on the left-hand side in
(\ref{rasp}) is negative on $E$ if and only if the expression on
the right-hand side (\ref{rasp}) is negative on $E$. Now we
observe that the mapping $u\mapsto -\tilde \lambda W_{\tilde
U,\epsilon}(x)^2 u$ is a relatively compact perturbation of
$-\Delta+\beta(x)$. Therefore, by Weyl's Theorem, the essential
spectrum of $-\Delta+\beta(x)-\tilde \lambda W_{\tilde
U,\epsilon}(x)^2$ is contained in $[\lambda_1,+\infty[$. Then,
thanks to the $\min-\max$ characterization of the eigenvalues of
self-adjoint operators (see e.g. \cite{RS}), we deduce that
\begin{equation}\mathcal N(W_{\tilde
U,\epsilon},\tilde\lambda)=n(-\Delta+\beta(x)-\tilde \lambda W_{\tilde U,\epsilon}(x)^2),
\end{equation}
where $n(-\Delta+\beta(x)-\tilde \lambda W_{\tilde U,\epsilon}(x)^2)$ is the number of negative eigenvalues of the operator  $-\Delta+\beta(x)-\tilde \lambda W_{\tilde U,\epsilon}(x)^2$.
The latter can be estimated by mean of Cwickel-Lieb-Rozenblum inequality in its abstract formulation due to Rozenblum and Solomyak (see \cite{RoSo} ). Namely, we have
\begin{equation}\label{CLR}
n(-\Delta+\beta(x)-\tilde \lambda W_{\tilde U,\epsilon}(x)^2)\leq M_r \int_\Omega   ( \tilde \lambda W_{\tilde U,\epsilon}(x)^2)^{r/2}\,dx,
\end{equation}
where $M_r$ is an constant depending only on $r$, $\lambda_1$, $|\beta|_{L^\sigma_{\mathrm u}}$, and on the constant of the embedding $H^2(\Omega)\subset L^\infty(\Omega)$ (see also \cite[Sect. 5]{P2} for details; we stress that the constant $M_r$ can be computed explicitly, though the determination of its optimal value seems out of reach). We have thus obtained that
\begin{equation}
\mathcal N(W_{\tilde U,\epsilon},\tilde\lambda)\leq \tilde\lambda^{r/2}M_r \int_\Omega    W_{\tilde U,\epsilon}(x)^r\,dx.
\end{equation}
Now fix $j\in\N$. For $\tilde\lambda>\lambda_{\tilde U,\epsilon,j}$ we have
\begin{equation}
j\leq N(W_{\tilde U,\epsilon},\tilde\lambda)\leq \tilde\lambda^{r/2}M_r \int_\Omega    W_{\tilde U,\epsilon}(x)^r\,dx.
\end{equation}
By letting $\tilde\lambda$ tend to $\lambda_{\tilde U,\epsilon,j}$ we get
\begin{equation}
j \leq \lambda_{\tilde U,\epsilon,j}^{r/2}M_r \int_\Omega    W_{\tilde U,\epsilon}(x)^r\,dx.
\end{equation}
It follows that
\begin{equation}
 \lambda_{\tilde U,\epsilon,j}\geq M_r^{-2/r}\| W_{\tilde U,\epsilon}\|_{L^r}^{-2}\,j^{2/r},
\end{equation}
whence
\begin{equation}\label{asympt}
 \mu_{\tilde U,\epsilon,j}\leq M_r^{2/r}\| W_{\tilde U,\epsilon}\|_{L^r}^{2}\,j^{-2/r}.
\end{equation}
Putting together (\ref{ineq-2}) and (\ref{asympt}), we get
\begin{equation}
\mathrm{Tr} (\mathbf B_{\tilde U,\Sigma_d,\delta}) \leq -\frac d
\alpha\left(2\nu_\alpha \alpha -\frac1d \sum_{j=1}^d M_r^{2/r}\|
W_{\tilde U,\epsilon}\|_{L^r}^{2}\,j^{-2/r}\right).
\end{equation}
Letting $\epsilon$ tend to $0$ and taking into account (\ref{pot}), we finally get
\begin{equation}\label{ineq-3}
\mathrm{Tr} (\mathbf B_{\tilde U,\Sigma_d,\delta}) \leq -\frac
{M_r^{2/r}\tilde C(\mathcal I)^2 d} \alpha\left(\frac{2\nu_\alpha
\alpha}{M_r^{2/r}\tilde C(\mathcal I)^2} -\frac1d \sum_{j=1}^d
\,j^{-2/r}\right),
\end{equation}
where
\begin{equation}\label{def-Ctilde}
\tilde C(\mathcal I):=\|\partial_u f(\cdot,0)\|_{L^r}+C(1+\sup_{(u,v)\in\mathcal I}\|u\|_{L^\infty})\sup_{(u,v)\in\mathcal I}\|u\|_{L^r}.
\end{equation}
We have thus obtained an estimate for $\mathrm{Tr} (\mathbf
B_{\tilde U,\Sigma_d,\delta}) $ which is uniform with respect to
$\tilde U$ and $\Sigma_d$. Now we are in a position to state and
prove the main result of the paper:
\begin{Theorem}\label{dimension} Assume Hypotheses \ref{hyp1},
\ref{hyp4}, \ref{hyp2} and \ref{hyp3} are satisfied. Let $\mathcal
I\subset Z_0$ be a compact invariant set of the semiflow $\Pi(t)$
generated by (\ref{eq2}). Let $\nu_\alpha$ and $\tilde C(\mathcal I)$ be
defined by (\ref{def-nu}) and (\ref{def-Ctilde}) respectively, and let $M_r$ be the constant of the Cwickel-Lieb-Rozenblum inequality (\ref{CLR}).
Let $d>0$ be such that
\begin{equation}\label{condition1}
\frac1d \sum_{j=1}^d \,j^{-2/r}\leq \frac{\nu_\alpha
\alpha}{M_r^{2/r}\tilde C(\mathcal I)^2}.
\end{equation}
Then the Hausdorff (resp. the fractal) dimension of $\mathcal I$
in $Z_0$ is finite, and is less than or equal to $d$ (resp. $2d$).
\end{Theorem}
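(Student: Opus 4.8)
The plan is to deduce the theorem directly from the uniform trace estimate (\ref{ineq-3}) together with the abstract volume‑contraction criterion of \cite[Ch.~V, pp.~287--291]{Tem} that was recalled just before the statement; essentially all the analytic work is already in place. First I would dispose of the change of coordinates: the linear isomorphism $R_\delta\colon Z_0\to Z_0$ is bi‑Lipschitz, hence preserves both Hausdorff and fractal dimension, so $\dim_H\mathcal I=\dim_H\mathcal I_\delta$ and $\dim_F\mathcal I=\dim_F\mathcal I_\delta$, and it suffices to bound the dimensions of $\mathcal I_\delta$. I would also record that, by Theorem \ref{newth1} together with $H_2=X^1\subset L^\infty(\Omega)$, the suprema $\sup_{(u,v)\in\mathcal I}\|u\|_{L^\infty}$ and $\sup_{(u,v)\in\mathcal I}\|u\|_{L^r}$ are finite (and, since $R_\delta$ leaves first components untouched, the same over $\mathcal I_\delta$), so that $\tilde C(\mathcal I)$ in (\ref{def-Ctilde}) is finite and (\ref{ineq-3}) is a genuine bound, uniform in $\tilde U\in\mathcal I_\delta$ and in the choice of $\Sigma_d$.

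Next I would pass from (\ref{ineq-3}) to the exponents $p_j$ of (\ref{exponents}). Writing the right‑hand side of (\ref{ineq-3}) with $d$ replaced by an arbitrary $j\in\N$ and taking the supremum over $\tilde U\in\mathcal I_\delta$ and over $j$‑dimensional subspaces $\Sigma_j\subset Z_0$ yields $p_j\le -2\nu_\alpha j+\alpha^{-1}M_r^{2/r}\tilde C(\mathcal I)^2\sum_{l=1}^j l^{-2/r}$. For $d$ satisfying (\ref{condition1}), the choice $j=d$ then gives $p_d\le -2\nu_\alpha d+\alpha^{-1}M_r^{2/r}\tilde C(\mathcal I)^2\cdot d\cdot \nu_\alpha\alpha\,M_r^{-2/r}\tilde C(\mathcal I)^{-2}=-\nu_\alpha d<0$, which is exactly the hypothesis the criterion needs; in particular $|p_d|\ge\nu_\alpha d$.

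For the fractal bound I must control $(p_j)_+/|p_d|$ for $1\le j\le d-1$. Here I would use the elementary monotonicity $\sum_{l=1}^j l^{-2/r}\le\sum_{l=1}^d l^{-2/r}$ together with (\ref{condition1}) to get $p_j\le -2\nu_\alpha j+\nu_\alpha d\le\nu_\alpha d$, whence $(p_j)_+\le\nu_\alpha d\le|p_d|$ and $\max_{1\le j\le d-1}\bigl(1+(p_j)_+/|p_d|\bigr)\le 2$. Finally I would invoke \cite[Ch.~V, pp.~287--291]{Tem}: since $p_d<0$, the Hausdorff dimension of $\mathcal I_\delta$ in $Z_0$ is finite and $\le d$, and the fractal dimension is finite and $\le d\max_{1\le j\le d-1}\bigl(1+(p_j)_+/|p_d|\bigr)\le 2d$; by the first paragraph the same bounds hold for $\mathcal I$, completing the proof.

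I do not expect a real obstacle at this stage: the $\delta$‑trick producing the $-2\nu_\alpha d$ term, the reduction to the weighted eigenvalue problem (\ref{weighted-eigen}), the Li--Yau/Cwickel--Lieb--Rozenblum estimate (\ref{CLR}), and the resulting Weyl‑type asymptotics (\ref{asympt}) have all already been carried out to arrive at (\ref{ineq-3}). The only points requiring a little care are the fractal‑dimension step — verifying that the ratios $(p_j)_+/|p_d|$ stay $\le 1$ for every $j<d$, which is where $\sum_{l=1}^j l^{-2/r}\le\sum_{l=1}^d l^{-2/r}$ is used — and the harmless bookkeeping gap between the factor $2\nu_\alpha\alpha$ in (\ref{ineq-3}) and the factor $\nu_\alpha\alpha$ in (\ref{condition1}), which is precisely what leaves the residual $-\nu_\alpha d$ and renders $p_d$ strictly negative.
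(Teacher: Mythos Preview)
Your proposal is correct and follows essentially the same route as the paper: from (\ref{ineq-3}) and (\ref{condition1}) you obtain $p_d\le-\nu_\alpha d<0$, and via $\sum_{l=1}^j l^{-2/r}\le\sum_{l=1}^d l^{-2/r}$ you get $(p_j)_+\le\nu_\alpha d\le|p_d|$, then apply the criterion from \cite[Ch.~V]{Tem}. The only thing to add is an explicit mention of Proposition~\ref{newprop2}, which supplies the uniform quasidifferentiability of $\Pi_\delta$ on $\mathcal I_\delta$ that Temam's criterion requires.
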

\begin{proof}
Let $p_j$, $j\in\N$, be the numbers defined by (\ref{exponents}). If $d$ satisfies condition (\ref{condition1}), then (\ref{ineq-3}) implies that $p_d\leq -\nu_\alpha d$.  Moreover,
for $j=1$, \dots, $d-1$, one has
$$
(p_j)_+\leq \frac
{M_r^{2/r}\tilde C(\mathcal I)^2 } \alpha\sum_{i=1}^{j-1}i^{-2r}\leq \frac
{M_r^{2/r}\tilde C(\mathcal I)^2 } \alpha\sum_{i=1}^{d}i^{-2r}\leq\nu_\alpha d.
$$
It  follows from Proposition \ref{newprop2} and  from the results in \cite[Ch. V, pp 287--291]{Tem} that
$\mathrm{dim}_{H}(\mathcal I)\leq d$ and
$\mathrm{dim}_{F}(\mathcal I)\leq d\max_{1\leq j\leq
d-1}(1+(p_j)_+/|p_d|)\leq 2d$. \end{proof}
\begin{Rem}
We can give an explicit estimate of $d$ just noticing that
\begin{equation}
\frac1d \sum_{i=1}^{d}i^{-2r}\leq\frac1d\int_0^d s^{-2/r}\,ds=\frac{r}{r-2}d^{-2/r}.
\end{equation}
It follows that
\begin{equation}\label{result1}
\mathrm{dim}_{H}(\mathcal I)\leq \left( \frac{r}{r-2}\frac{M_r^{2/r}\tilde C(\mathcal I)^2}{\nu_\alpha\alpha}\right)^{r/2}
\end{equation}
and
\begin{equation}\label{result2}
\mathrm{dim}_{F}(\mathcal I)\leq 2\left( \frac{r}{r-2}\frac{M_r^{2/r}\tilde C(\mathcal I)^2}{\nu_\alpha\alpha}\right)^{r/2}.
\end{equation}
\end{Rem}
Notice that $\nu_\alpha\alpha\to\lambda_1$ as $\alpha\to\infty$. Therefore, if we have a family $\mathcal I_\alpha$ of invariant sets of $\Pi(t)=\Pi_\alpha(t)$ and if we can control $|\mathcal I_\alpha|_{Z^1}$ independently of $\alpha$, we obtain that the dimension of $\mathcal I_\alpha$ remains bounded as $\alpha\to\infty$. This is actually the case when the non-linearity $f$ is dissipative and $\mathcal I_\alpha$ is the compact global attractor of $\Pi_\alpha(t)$, as we shall see in the next section.


\section{Dissipative equations: dimension of the attractor}
In this section we consider the equation
\begin{equation}\label{attr1}
\begin{aligned}
\epsilon u_{tt}+\ u_t+\beta(x)u-\Delta u
&=f(x,u),&&(t,x)\in[0,+\infty[\times\Omega,\\
u&=0,&&(t,x)\in[0,+\infty[\times\partial\Omega
\end{aligned}\end{equation}
where $\epsilon\in]0,1]$. Besides Hypotheses \ref{hyp1},
\ref{hyp4}, \ref{hyp2} and \ref{hyp3}, we assume:

\begin{Hyp}\label{hyp5}There exists a positive number $\mu$ and a function $c(\cdot)\in L^1(\Omega)$ such that:
\begin{enumerate}
\item $f(x,u)u-\mu F(x,u)\leq c(x)$;
\item $F(x,u)\leq c(x)$.
\end{enumerate}
Here, $F(x,u):=\int_0^uf(x,s)\,ds$, $(x,u)\in\Omega\times\R$.
\end{Hyp}
It was proved in \cite{PR2} that, under Hypotheses \ref{hyp1}, \ref{hyp2} and \ref{hyp5}, for every $\epsilon\in]0,1]$ equation (\ref{attr1})  generates a global semiflow in $Z_0$,
possessing a compact global attractor $\mathcal A_\epsilon$. Moreover, there exists a positive constant $R$ such that
\begin{equation*}
\sup_{\epsilon\in]0,1]}\sup\{\|u\|_{H^1_0}^2+\epsilon\|v\|_{L^2}^2\mid(u,v)\in\mathcal A_\epsilon\}\leq R.
\end{equation*}
The constant $R$ depends only  on the constants in Hypotheses \ref{hyp1}, \ref{hyp2} and \ref{hyp5} and on $\|c(\cdot)\|_{L^1}$, and can be explicitly computed (see \cite{PR2}). In particular, $R$ is independent of $\epsilon$. Moreover, it was proved in \cite{P1} that
there exists a positive constant $\tilde R$ such that
\begin{equation*}
\sup_{\epsilon\in]0,1]}\sup\{\|u\|_{H^2\cap H^1_0}^2+\|v\|_{H^1_0}^2\mid(u,v)\in\mathcal A_\epsilon\}\leq \tilde R.
\end{equation*}
Also, the constant $\tilde R$ depends only on the constants in
Hypotheses \ref{hyp1}, \ref{hyp2} and \ref{hyp5} and on
$\|c(\cdot)\|_{L^1}$ and can be explicitly computed (see
\cite{P1}). In particular,  $\tilde R$ is independent of
$\epsilon$. By a time re-scaling ($t=\epsilon^{1/2}s$) we see that
(\ref{attr1}) is equivalent to
\begin{equation}\label{attr2}
\begin{aligned}
\epsilon \check u_{ss}+\alpha\ \check u_s+\beta(x)\check
u-\Delta\check u &=f(x,\check
u),&&(s,x)\in[0,+\infty[\times\Omega,\\ \check
u&=0,&&(s,x)\in[0,+\infty[\times\partial\Omega
\end{aligned}\end{equation}
where $\alpha:=\epsilon^{-1/2}$. Equation (\ref{attr2}) possesses
a compact global attractor $\check{\mathcal A}_\alpha$, such that
\begin{equation}
\check{\mathcal A}_\alpha=\{(\check u,\check v)\in Z_0\mid (\check
u,\alpha\check v)\in \mathcal A_{\alpha^{-2}}\}.
\end{equation}
It follows that $|\check{\mathcal A}_\alpha|_{Z_0}\leq R$ and
$|\check{\mathcal A}_\alpha|_{Z_1}\leq \tilde R$. As a
consequence, the constant $\tilde C(\check{\mathcal A}_\alpha)$ in
(\ref{result1}) and (\ref{result2}) can be explicitly computed,
and in particular it is independent of $\alpha$. We have then
\begin{equation}\label{result3}
\mathrm{dim}_{H}(\mathcal
A_\epsilon)=\mathrm{dim}_{H}(\check{\mathcal
A}_{\epsilon^{-1/2}})\leq \left(
\frac{r}{r-2}\frac{M_r^{2/r}\tilde C(\check{\mathcal
A}_{\epsilon^{-1/2}})^2}{\nu_{\epsilon^{-1/2}}\epsilon^{-1/2}}\right)^{r/2}
\end{equation}
and
\begin{equation}\label{result4}
\mathrm{dim}_{F}(\mathcal
A_\epsilon)=\mathrm{dim}_{F}(\check{\mathcal
A}_{\epsilon^{-1/2}})\leq 2\left(
\frac{r}{r-2}\frac{M_r^{2/r}\tilde C(\check{\mathcal
A}_{\epsilon^{-1/2}})^2}{\nu_{\epsilon^{-1/2}}\epsilon^{-1/2}}\right)^{r/2}.
\end{equation}
Since $\nu_\alpha\alpha\to\lambda_1$ as $\alpha\to\infty$, we obtain that $\mathrm{dim}_{H}(\mathcal A_\epsilon)$ and
$\mathrm{dim}_{F}(\mathcal A_\epsilon)$ remain bounded  as $\epsilon\to 0$,
coherently with the fact that the $\mathcal A_\epsilon$ ``converge", as $\epsilon\to 0$, to the attractor of the parabolic equation
\begin{equation}\label{attr4}
\begin{aligned}
 u_t+\beta(x)u-\Delta u
&=f(x,u),&&(t,x)\in[0,+\infty[\times\Omega,\\
u&=0,&&(t,x)\in[0,+\infty[\times\partial\Omega
\end{aligned}\end{equation}
(see \cite{PR3} and \cite{P1}).


\end{document}